\newtheorem{theorem}{Theorem}[section]
\newtheorem{lemma}[theorem]{Lemma}
\newtheorem{corollary}[theorem]{Corollary}
\newtheorem{definition}[theorem]{Definition}
\newtheorem{example}[theorem]{Example}
\newenvironment{proof}{\medbreak\noindent{\it Proof.}\rm}{\hfill$\square$\rm}
 \newcommand{\beq}{\begin{equation}}
\newcommand{\eeq}{\end{equation}}
\numberwithin{equation}{section}
\newcommand{\Log}{{\operatorname{Log}\,}}
\newcommand{\R}{{ \mathbb R}}
\newcommand{\Rn}{{ \mathbb R}^n}
\newcommand{\Rnp}{{\mathbb R}_+^n}
\newcommand{\Rnm}{{\mathbb R}_-^n}
\newcommand{\C}{{\mathbb  C}}
\newcommand{\Cn}{{\mathbb  C}^n}
\newcommand{\D}{{\mathbb D}}
\newcommand{\Bn}{{\mathbb B}^n}
\newcommand{\Bone}{\mathbbm{1}}
\newcommand{\BE}{{\mathbf E}}
\newcommand{\F}{{\mathcal F}}
\newcommand{\E}{{\mathcal E}}
\newcommand{\I}{{\mathcal I}}
\newcommand{\cA}{{\mathcal  A}}
\newcommand{\cL}{{\mathcal  L}}
\newcommand{\cO}{{\mathcal O}}
\newcommand{\PSH}{{\operatorname{PSH}}}
\newcommand{\SH}{{\operatorname{SH}}}
\newcommand{\supp}{{\operatorname{supp}}}
\newcommand{\Capa}{{\operatorname{Cap}\,}}
\newcommand{\Vol}{{\operatorname{Vol}}}
\begin{document}


\begin{center}
{\Large\bf Plurisubharmonic interpolation and plurisubharmonic geodesics}
\end{center}

\begin{center}
{\large\bf Alexander Rashkovskii}
\end{center}

\begin{center} {\sl To the memory of Anatolii Asirovich Gol’dberg (1930–2008)}\end{center}

\begin{abstract} We give a short survey on plurisubharmonic interpolation, with focus on possibility of connecting two given plurisubharmonic functions by plurisubharmonic geodesic.
\medskip\noindent
{\sl Mathematic Subject Classification}: 32U05, 32U15, 32U35, 32W20
\end{abstract}

\section{Introduction}
In a model example of the classical Calder\'{o}n's complex interpolation theory \cite{Ca} (see also \cite{BL}, \cite{CEK}), two Banach spaces $X_j=(\Cn,\|\cdot\|_j)$ ($j=0,1$) are interpolated by intermediate Banach spaces $X_\zeta=(\Cn,\|\cdot\|_\zeta)$ for $\zeta\in S=
\{\zeta=\sigma+i\,\tau:\: 0<\sigma<1\}\subset\C$ with the norms
$$\|w\|_\zeta=\|w\|_{\sigma}=\inf\, \{ \max_{j=0,1} N_j(f):\: f\in\mathcal F,\ f(\zeta)=w\},$$
where $\mathcal F$ is the family of mappings $f:\: S\to\Cn$, bounded and analytic in $C$, continuous up to the boundary,
 $f(\sigma+i\,\tau)\to 0$ as $\tau\to\pm\infty$, and
$N_j(f)=\sup_{\tau\in\R}\|f(j+i\,\tau)\|_j$, $j=0,1$.

Its plurisubharmonic version was considered in \cite{CER} as follows. Given two plurisubharmonic functions $u_0,u_1$ in a bounded domain $D$ of $\Cn$, find a plurisubharmonic function $u$ in $D\times \cA\subset\C^{n+1}$ with
$\cA=\{\zeta:\: 0<\log|\zeta|<1\}\subset\C$, whose boundary values on $ \cA_j= \{\log|\zeta|=j\}$
coincide with $u_j$ ($j=0,1$) (the annulus $\cA$ being used instead of the strip $S$ just to stick to the standard setting of the Dirichlet problem for the complex Monge-Amp\`ere operator in bounded domains.)

More precisely, denote
\beq\label{W} W(u_0,u_1)=\{v\in\PSH(D\times\cA):\: \limsup_{\zeta\to \cA_j} v(\cdot,\zeta)\le u_j(\cdot), \ j=0,1\}.\eeq
When both $u_0$ and $u_1$ are bounded above, this set is not empty (it contains $u_0+u_1-M$ for a constant $M$ big enough).
Note also that for any $v\in W(u_0,u_1)$, the function $\sup\{v(z,\xi): |\xi|=|\zeta|\}$ belongs to $W(u_0,u_1)$ as well.

Let  $\widehat u=\sup\{v:\: v\in W(u_0,u_1)\}$. It is a plurisubharmonic function in $D\times \cA$, depending on $z$ and $|\zeta|$; in particular, it is a convex function of $\log|\zeta|$.

\begin{definition} A family $v_t(z)$, $0<t<1$, is a {\it subgeodesic for $u_0,u_1$} if $v_{\log|\zeta|}(z)\in W(u_0,u_1)$.

The largest subgeodesic, $u_t$,  is the {\it geodesic}.
In other words, $u_t(z)=u_{\log|\zeta|}(z)=\widehat u(z,\zeta)$ for $t=\log|\zeta|$.
\end{definition}

This shows that the plurisubharmonic interpolation problem is equivalent to finding a (sub)geodesic passing through the two given plurisubharmonic functions, which we will call here the {\it geodesic connectivity problem}.

The origins of the plurisubharmonic geodesics lie in studying K\"{a}hler metrics on compact complex manifolds $(X,\omega)$. Starting with  \cite{Mab}, a notion of geodesics in the spaces of such metrics has been playing a prominent role in K\"{a}hler geometry and has found a lot of applications;  see, for example, \cite{Sem}, \cite{Chen}, \cite{Do}, \cite{G12}, and the bibliography therein. A considerable progress was made then by relating the metrics to quasi-psh functions on compact complex manifolds, see \cite{BmBo1}, \cite{Be1}, \cite{Da14a}--\cite{DNL22}, \cite{GLZ}, \cite{RWN}, \cite{RWN17}, and many others.

 We would like especially to refer to \cite{RWN} and \cite{Da14a} where the connectivity problem for quasi-plurisubharmonic functions on compact K\"{a}hler manifolds was for the first time treated in terms of {\it rooftop envelopes}; the approach was developed then in \cite{DNL18}--\cite{DNL21}, \cite{Mc} and other recent papers. A nice overview of this activity can be found in \cite{DNL23}.

For the local setting of plurisubharmonic functions on bounded domains, the geodesics were considered in the (unpublished) preprint \cite{BB} and, independently, in \cite{R16} and \cite{A}, and then in \cite{R17}, \cite{H}, \cite{CER}, \cite{R21}, \cite{R22}. Comparing with the compact manifold case, two main difficulties arise. First, a plurisubharmonic function can have its singularities on the boundary of the domain, and theory of boundary behavior of such plurisubharmonic functions is at the moment underdeveloped. Another issue is the lack of control over the total Monge-Amp\`ere mass of plurisubharmonic functions and of monotonicity for its non-pluripolar part, the central tools used in the compact case.

In this paper, we survey the local theory  results; also, we present new results on solvability of the connectivity problem in the Cegrell class $\F$.

We start with the simplest case of interpolation of bounded plurisubharmonic functions and functions from the Cegrell class $\E_0$ (Sections 1, 2) and applications to set interpolation by means of relative extremal functions (Sections 3--5). In Section 6, we extend these results to the Cegrell class $\F_1$. The main tool for interpolation of unbounded functions is the rooftop technique, which we present in Section 7. A general setting of the connectivity problem is given in Section 8, and we introduce one of the main objects of the theory, residual plurisubharmonic functions, in Section 9. We formulate a fundamental Rooftop Equality conjecture and confirm it for functions from the class $\F$ in Section 10, and we apply it to solving the connectivity problem for certain cases in Section 11. Finally, in Section 12, we mention a few other directions in the field of plurisubharmonic interpolation and geodesics.

\medskip
The set of all plurisubharmonic ({\it psh} for short) functions in a domain $\Omega$ will be denoted by $\PSH(\Omega)$, and its subset of negative functions is $\PSH^-(\Omega)$. For basics on psh functions, we refer to \cite{Kl}, \cite{GZ}. A nice (and comprehensive) presentation of the theory of Cegrell classes can be found in \cite{Cz}.

\section{Bounded psh functions and class $\E_0$}

Let $D$ be a bounded domain in $\Cn$ and let $u_0,u_1\in\PSH(D)\cap L^\infty(D)$. We consider the class $W(u_0,u_1)$ given by (\ref{W}) and define its upper envelope $\widehat u$ and geodesic $u_t$ as described in Introduction.
 We define the subgeodesic $V_t:=\max\{u_0-M\,t,u_1-M\,(1-t)\}$, where $M=\|u_0-u_1\|_\infty$, and get
\beq\label{VtM} V_t\le u_t\le (1-t)u_0+tu_1, \quad 0<t<1\eeq
(the second inequality being a consequence of convexity of $u_t$ in $t$),
which implies that
 $u_t\to u_j$, uniformly on $D$, as $t\to j\in\{0,1\}$.
This means that, in the bounded case, the geodesic always exists and attains the boundary values in a very nice sense.

When $u_j=0$ on $\partial D$, one has control on the regularity of the upper envelope $\widehat u$ of $W(u_0,u_1)$ and, therefore, on $u_t$:

\begin{theorem} {\rm \cite{A}} If $u_j\in C^{1,1}(D)$, then $u_t\in C^{1,1}(D)$ both in $z$ and $t$.
\end{theorem}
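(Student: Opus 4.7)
The plan is to realize $\widehat u$ as the unique maximal plurisubharmonic solution of a Dirichlet problem on the product domain $\Omega := D\times\cA$ with $C^{1,1}$ boundary data, and then invoke $C^{1,1}$ regularity results for the homogeneous complex Monge--Amp\`ere equation.

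First I would pin down the boundary behaviour of $\widehat u$ on all of $\partial\Omega$. Since $u_j = 0$ on $\partial D$, the explicit subgeodesic $V_t(z) = \max\{u_0(z)-Mt,\ u_1(z)-M(1-t)\}$ vanishes on $\partial D\times\cA$, and the convex competitor $(1-t)u_0 + tu_1$ does too. Sandwiching via (\ref{VtM}) then forces $\widehat u \equiv 0$ on the lateral boundary $\partial D\times\cA$ with continuous attainment, while the same estimate already gives continuous attainment of $u_j$ on $D\times\cA_j$. A standard balayage argument (locally replacing $\widehat u$ by solutions of $(dd^c v)^{n+1}=0$ on small product subdomains) shows that $\widehat u$ is maximal in $\Omega$, so it satisfies $(dd^c \widehat u)^{n+1} = 0$; rotational invariance in $\zeta$ is automatic by the symmetrization remark made just before the definition of subgeodesic in the introduction.

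The main step is then to promote this continuous maximal solution to a $C^{1,1}$ one. I would build explicit $C^{1,1}$ subsolutions and supersolutions on $\Omega$ out of the $C^{1,1}$ data --- for instance $V_t$ together with a quadratic perturbation in $(z,\log|\zeta|)$, bounded above by a smoothing of $(1-t)u_0+tu_1$ --- and then apply the $C^{1,1}$ estimate for the Dirichlet problem of the homogeneous complex Monge--Amp\`ere equation in the tradition of Bedford--Taylor and B\l ocki. Because $\widehat u(z,\zeta)$ depends on $\zeta$ only through $|\zeta|$, joint $C^{1,1}$ regularity of $\widehat u$ in $(z,\log|\zeta|)$ translates directly into joint $C^{1,1}$ regularity of $u_t(z)$ in $(z,t)\in D\times(0,1)$.

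The main obstacle I anticipate is boundary regularity at the ``edges'' $\partial D\times\cA_j$, where two smooth faces of $\partial\Omega$ meet at an angle. Interior $C^{1,1}$ estimates for the degenerate complex Monge--Amp\`ere equation are by now classical, but $C^{1,1}$ behaviour up to such edges requires careful edge barriers exploiting the product structure. This is where one needs $D$ to be (at least) strongly pseudoconvex, so that a plurisubharmonic defining function on the $D$-factor can be combined with a barrier on $\cA$ to produce the required $C^{1,1}$ super- and subsolutions; the compatibility condition $u_j|_{\partial D}=0$ is crucial here, as otherwise the boundary data would be discontinuous across the edge. Once edge barriers are available, the standard bootstrap from $C^0$ control to Lipschitz control to $C^{1,1}$ control proceeds in the usual way.
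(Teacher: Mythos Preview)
First, note that the paper itself does not prove this statement: it is a survey, and the theorem is simply quoted from \cite{A} without argument, so there is no ``paper's own proof'' against which to compare your sketch.

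As for the sketch itself, there is a genuine gap beyond the edge issue you already anticipate. The $C^{1,1}$ Dirichlet theory ``in the tradition of Bedford--Taylor and B\l ocki'' that you plan to invoke is a boundary-regularity result that needs a \emph{strongly} pseudoconvex domain with smooth boundary. The product $D\times\cA$ is neither: even away from the edges $\partial D\times\cA_j$, the Levi form of any defining function is degenerate in the direction of the other factor, so the domain is only weakly pseudoconvex. No amount of edge-barrier engineering repairs this, and your claim that ``interior $C^{1,1}$ estimates for the degenerate complex Monge--Amp\`ere equation are by now classical'' is too optimistic---such estimates are not available from the equation alone without boundary input or additional structure. (Incidentally, your proposed lower barrier $V_t=\max\{u_0-Mt,\,u_1-M(1-t)\}$ is only Lipschitz, not $C^{1,1}$, so it cannot serve as a $C^{1,1}$ subsolution either.) The route actually taken in \cite{A}, modelled on Chen and B\l ocki in the compact K\"ahler setting, bypasses the global Dirichlet theory entirely: one solves the nondegenerate perturbation $(dd^c\widehat u_\epsilon)^{n+1}=\epsilon\,dV$ and proves a priori $C^{1,1}$ bounds on $\widehat u_\epsilon$ uniform in $\epsilon$ by a maximum-principle argument on the Laplacian tailored to the geodesic equation, then lets $\epsilon\to 0$. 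Those estimates exploit the product structure and the convexity in $t$ rather than strong pseudoconvexity of $D\times\cA$, which is precisely why they go through here.
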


Subgeodesics and geodesics of psh functions have some special properties when the functions belong to a specific class -- namely, to the Cegrell class $\F_1$. Here we start however with another Cegrell class, $\E_0$.

Assume $D$ is a bounded {\it hyperconvex} domain, i.e., there exists a negative psh function $\rho$ exhausting $D$: $D_c=\{z:\rho(z)<c\}\Subset D$ for all $c<0$ and $\cup_{c<0} D_c=D$. The Cegrell class $\E_0(D)$ is formed by all bounded psh functions $u$ in $D$  that have zero boundary value on $\partial D$ ($u(z)\to 0$ as $z\to \partial D$) and  finite total Monge-Amp\`ere mass:
$\int_D(dd^cu)^n<\infty$.

Note that if $u_0,u_1\in \E_0(D)$, then $u_t\in\E_0(D)$ for any $t$ as well.

Consider the {\it energy functional}
$$\BE(u)=\int_D u(dd^c u)^n.$$
By integration by parts,
$$ \BE(u)-\BE(v)=\int_D (u-v)\sum_{k=0}^n (dd^c u)^k\wedge(dd^c v)^{n-k}, \quad u,v\in\E_0(D).$$

\begin{theorem}\label{U1} {\rm \cite{R16}}
Let $u,v\in\E_0(D)$ satisfy $u\le v$. Then
\begin{enumerate} \item $\BE(u)\le \BE(v)$; \item if $\BE(u)= \BE(v)$, then $u=v$.
\end{enumerate}
\end{theorem}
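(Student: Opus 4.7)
The plan rests on the integration-by-parts identity displayed just above the theorem statement,
\[
\BE(u) - \BE(v) = \int_D (u-v)\sum_{k=0}^n (dd^cu)^k \wedge (dd^cv)^{n-k}.
\]
The key observation is that the current $T := \sum_{k=0}^n (dd^cu)^k \wedge (dd^cv)^{n-k}$ is a positive Radon measure on $D$, because each summand is a Bedford--Taylor wedge of positive currents associated with bounded plurisubharmonic functions and hence well defined on $\E_0$. Since $u - v \le 0$ by hypothesis, the right-hand side is non-positive, which immediately proves part (1).

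For part (2), assume $\BE(u) = \BE(v)$. Then $\int_D (v-u)\,T = 0$, and because $T$ is a sum of non-negative measures and the integrand $v-u$ is non-negative, each individual term in the sum must vanish. In particular, taking $k = n$,
\[
\int_D (v-u)\,(dd^cu)^n = 0,
\]
so $v = u$ almost everywhere with respect to the measure $(dd^cu)^n$.

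The remaining step is to upgrade this $(dd^cu)^n$-a.e.\ equality to equality everywhere in $D$. For this I would invoke the domination principle for the Cegrell class $\E_0$: if $f,g \in \E_0(D)$ and $f \le g$ holds $(dd^cg)^n$-a.e., then $f \le g$ throughout $D$. Applied with $f = v$ and $g = u$ (using $v \le u$ $(dd^cu)^n$-a.e., a trivial consequence of the equality there), this produces $v \le u$ in $D$; combined with the hypothesis $u \le v$, it yields $u = v$. This is precisely where the work lies: Steps 1--2 are mechanical sign-and-positivity bookkeeping against the integration-by-parts formula, but promoting a measure-theoretic equality to a pointwise equality really does require the domination principle, whose proof in turn rests on a non-trivial application of the Bedford--Taylor comparison principle extended to $\E_0$. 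Without such a tool, $(dd^cu)^n$ could a priori be supported on a proper subset of $D$ and carry no information about $u-v$ off its support.
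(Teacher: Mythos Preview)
The paper is a survey and does not include a proof of this theorem; it only cites \cite{R16} and displays the integration-by-parts identity immediately before the statement, which is precisely the starting point you use. Your argument is correct and is the intended one: part (1) is immediate from positivity of the mixed Monge--Amp\`ere measures, and for part (2) the vanishing of $\int_D (v-u)(dd^c u)^n$ together with the domination principle in $\E_0$ (the key nontrivial input, which you correctly identify and justify) yields $v\le u$, hence $u=v$.
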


The energy functional has the following remarkable properties.

\begin{theorem}  {\rm \cite{BB}, \cite{R16}} Let $u_0,u_1\in \E_0(D)$. Then:
\begin{enumerate}
\item For any subgeodesic $v_t\in \E_0(D)$,  $\BE(v_t)$ is a convex function of $t$;
\item for a subgeodesic $v_t\in \E_0(D)$, $t\mapsto\BE(v_t)$ is linear if and only if $v_t$ is a geodesic for some $u_0,u_1\in\E_0(D)$.
\end{enumerate}
\end{theorem}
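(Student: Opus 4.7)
The plan is to identify $\frac{d^2}{dt^2}\BE(v_t)$ with the push-forward under $\pi:(z,\zeta)\mapsto\log|\zeta|$ of the Monge--Amp\`ere measure of the lifted psh function $\widetilde v(z,\zeta):=v_{\log|\zeta|}(z)$ on $D\times\cA$, a Semmes--Donaldson--Mabuchi-type identity; once this is established, both parts follow by combining positivity of $(dd^c\widetilde v)^{n+1}$ with the rigidity Theorem \ref{U1}.

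Assume first that the family $v_t\in\E_0(D)$ is smooth. Two consecutive integrations by parts in $z$ (using $v_t=\dot v_t=0$ on $\partial D$) give
\beq\label{plan-der}
\frac{d^2}{dt^2}\BE(v_t)=(n+1)\!\int_{D}\bigl[\ddot v_t(dd^c v_t)^n - n\,d\dot v_t\wedge d^c\dot v_t\wedge(dd^c v_t)^{n-1}\bigr].
\eeq
Expanding $dd^c\widetilde v=dd^c_z v_t+d_z\dot v_t\wedge d^c t+dt\wedge d^c_z\dot v_t+\ddot v_t\,dt\wedge d^c t$ with $t=\log|\zeta|$ and using $dd^c\log|\zeta|=0$, one sees after raising to the $(n+1)$-st power that the integrand in (\ref{plan-der}) is, up to a harmless constant, exactly $\pi_*((dd^c\widetilde v)^{n+1})/dt$. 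Since $\widetilde v$ is psh, $(dd^c\widetilde v)^{n+1}\ge 0$, hence $\frac{d^2}{dt^2}\BE(v_t)\ge 0$ and $\BE(v_t)$ is convex in the smooth setting. For a general subgeodesic $v_t$ with $v_j\in\E_0(D)$, the lift $\widetilde v$ is bounded psh on the hyperconvex domain $D\times\cA$ and vanishes on $\partial D\times\cA$ thanks to (\ref{VtM}); I would approximate $\widetilde v$ by a decreasing sequence of smooth psh functions still depending only on $|\zeta|$ (obtained by first averaging in $\arg\zeta$ and then convolving on slightly shrunk subdomains), apply the smooth case, and pass to the limit using Cegrell-continuity of $\BE$ along decreasing sequences in $\E_0$. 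This yields part (1).

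For part (2), if $v_t$ is the geodesic $u_t$ for $u_0,u_1\in\E_0(D)$ then the upper envelope $\widehat u$ is the maximal psh function on $D\times\cA$ with the prescribed bounded boundary data (the $u_j$ on $\cA_j$ and $0$ on $\partial D\times\cA$); by standard Bedford--Taylor Dirichlet theory on hyperconvex domains it solves $(dd^c\widehat u)^{n+1}=0$ in the interior, so the identification above forces $\frac{d^2}{dt^2}\BE(u_t)\equiv 0$ and $\BE(u_t)$ is linear. Conversely, assume the subgeodesic $v_t$ has $\BE(v_t)$ linear. Let $\tilde u_j$ be the usc regularization of $z\mapsto\lim_{t\to j}v_t(z)$ (the pointwise limits exist by convexity of $t\mapsto v_t(z)$ and uniform boundedness from (\ref{VtM})); the uniform bound together with the vanishing of $v_t$ on $\partial D$ then forces $\tilde u_j\in\E_0(D)$. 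Let $\widehat v_t$ be the geodesic of $(\tilde u_0,\tilde u_1)$; then $v_t\in W(\tilde u_0,\tilde u_1)$, so $v_t\le\widehat v_t$. Theorem \ref{U1}(1) gives $\BE(v_t)\le\BE(\widehat v_t)$, and by the direction just proved $\BE(\widehat v_t)$ is linear with the same endpoints $\BE(\tilde u_j)$ as $\BE(v_t)$; the two linear functions therefore coincide on $[0,1]$. Theorem \ref{U1}(2) now forces $v_t=\widehat v_t$ for every $t$, so $v_t$ itself is a geodesic.

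The principal obstacle is the approximation argument in part (1): the individual wedge products in (\ref{plan-der}) are a priori undefined for $v_t\in\E_0(D)$, because $\dot v_t$ is merely a distributional $t$-derivative of a psh function and in general neither bounded nor psh. The remedy is to carry out the analysis entirely at the level of the globally defined positive measure $(dd^c\widetilde v)^{n+1}$ and transfer the conclusion back to $\BE(v_t)$ by monotone continuity of the Cegrell energy. A subsidiary delicate point is verifying $\tilde u_j\in\E_0(D)$ and the endpoint convergence $\BE(v_t)\to\BE(\tilde u_j)$ in the converse of part (2); both reduce once more to the uniform bounds supplied by (\ref{VtM}).
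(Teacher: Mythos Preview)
The paper is a survey and does not prove this theorem; it simply cites \cite{BB} and \cite{R16}. So there is no in-paper argument to compare against directly, but your strategy --- the push-forward identity identifying $\frac{d^2}{dt^2}\BE(v_t)$ with $\pi_*(dd^c\widetilde v)^{n+1}$, followed by the rigidity Theorem~\ref{U1} --- is precisely the route taken in \cite{R16}. Part~(1) and the implication ``geodesic $\Rightarrow$ linear'' in part~(2) are handled correctly, modulo the approximation technicalities you already flag.

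There is, however, a genuine gap in your ``linear $\Rightarrow$ geodesic'' argument. You twice invoke (\ref{VtM}) to obtain uniform $L^\infty$ bounds on the subgeodesic $v_t$, but (\ref{VtM}) is a two-sided estimate for the \emph{geodesic} $u_t$ only; for an arbitrary subgeodesic one merely has $v_t\le u_t$, not $v_t\ge V_t$. Nothing you have written prevents $\|v_t\|_\infty\to\infty$ as $t\to j$, so the assertions $\tilde u_j\in\E_0(D)$ and $\BE(v_t)\to\BE(\tilde u_j)$ are unjustified, and your endpoint comparison collapses.

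A clean fix that avoids the endpoint issue entirely is to run your comparison on a closed subinterval. Fix $0<t_0<t_1<1$; the endpoints $v_{t_0},v_{t_1}$ lie in $\E_0(D)$ by hypothesis, so form the geodesic $\widehat v_t$ for this pair on $[t_0,t_1]$. Then $v_t\le\widehat v_t$ there, both energies are affine on $[t_0,t_1]$ and agree at $t_0,t_1$, and Theorem~\ref{U1}(2) yields $v_t=\widehat v_t$. After an affine reparametrization of $[t_0,t_1]$ onto $[0,1]$ this exhibits $v_t$ as a geodesic for data in $\E_0(D)$, which is what part~(2) asks for.
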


From this, one can deduce the following uniqueness result.

\begin{theorem}\label{U2} {\rm \cite{R16}}
If $u_0,u_1\in\E_0(D)$ satisfy
$ \int_D u_0 (dd^c u_0)^k\wedge (dd^c u_1)^{n-k} =\BE(u_1)$ for $k=0,\ldots,n$,
then $u_0= u_1$.
\end{theorem}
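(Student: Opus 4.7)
The plan is to use Cegrell's integration by parts on $\E_0$ to turn the hypothesis into two vanishing integrals, combine them via a telescoping identity, and then conclude by a uniqueness step.

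Set $\phi=u_0-u_1$. The case $k=n$ of the hypothesis gives $\BE(u_0)=\BE(u_1)$, while the case $k=0$ rewrites as
$$\int_D \phi\,(dd^c u_1)^n \;=\; \int_D u_0\,(dd^c u_1)^n \,-\, \BE(u_1) \;=\; 0.$$
For the case $k=n-1$, integration by parts yields $\int u_0\,dd^c u_1\wedge(dd^c u_0)^{n-1}=\int u_1(dd^c u_0)^n$; the hypothesis then forces $\int u_1(dd^c u_0)^n=\BE(u_1)=\BE(u_0)$, so
$$\int_D \phi\,(dd^c u_0)^n \;=\; \BE(u_0) - \int u_1(dd^c u_0)^n \;=\; 0.$$

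Next, I would use the telescoping identity $(dd^c u_1)^n-(dd^c u_0)^n=dd^c(u_1-u_0)\wedge T$, where
$$T \;=\; \sum_{k=0}^{n-1}(dd^c u_0)^k\wedge(dd^c u_1)^{n-1-k}$$
is a positive closed current. Subtracting the two vanishings and performing one more integration by parts (justified since $u_0,u_1$ have zero boundary values in the Cegrell sense) gives
$$0 \;=\; \int_D \phi\,\bigl[(dd^c u_1)^n-(dd^c u_0)^n\bigr] \;=\; \int_D d\phi\wedge d^c\phi\wedge T.$$
Since $T$ is positive closed and $d\phi\wedge d^c\phi$ is a positive $(1,1)$-form for real $\phi$, the integrand is a non-negative measure on $D$, so it vanishes identically.

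The concluding step -- deducing $u_0=u_1$ from the vanishing of this positive current -- is the main obstacle. I would combine $\int d\phi\wedge d^c\phi\wedge T=0$ with the Blocki-type $L^2$-estimate
$$\int_D \phi^2\,(dd^c u_j)^n \;\le\; C\,\|u_j\|_\infty^{n-1}\|\phi\|_\infty \int_D d\phi\wedge d^c\phi\wedge(dd^c u_j)^{n-1},\qquad j=0,1,$$
noting that $(dd^c u_j)^{n-1}\le T$ in the sense of positive currents. This forces $\phi=0$ with respect to the Monge--Amp\`ere masses of $u_0$ and $u_1$, after which the domination principle upgrades this to $\phi\equiv 0$. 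Alternatively, one may observe that the hypothesis saturates Cegrell's H\"older-type inequality $-\int u_0(dd^c u_1)^n\le|\BE(u_0)|^{1/(n+1)}|\BE(u_1)|^{n/(n+1)}$, whose equality case (tracked through the underlying iterated Cauchy--Schwarz) forces $u_0$ and $u_1$ to be proportional, and $\BE(u_0)=\BE(u_1)$ then pins the proportionality constant at $1$. The delicate point along either route is that the Monge--Amp\`ere masses can be carried on thin sets, so one must work to propagate the equality throughout $D$.
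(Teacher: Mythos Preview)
Your argument is essentially correct, but it proceeds along a genuinely different route from the one the paper indicates. The paper deduces Theorem~\ref{U2} from the geodesic machinery: under the hypothesis one first gets $\BE(u_0)=\BE(u_1)$, and then, using integration by parts, \emph{every} mixed energy $\int u_i\,(dd^cu_0)^k\wedge(dd^cu_1)^{n-k}$ equals $\BE(u_1)$; expanding $\BE\bigl((1-t)u_0+tu_1\bigr)$ multilinearly shows it is constant in $t$. Since the geodesic $u_t$ lies below $(1-t)u_0+tu_1$ and has $\BE(u_t)$ affine (hence constant, equal to $\BE(u_1)$), Theorem~\ref{U1}(2) forces $u_t=(1-t)u_0+tu_1$, and the fact that this affine curve is then maximal in $(z,\zeta)$ yields $u_0=u_1$. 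Your approach bypasses geodesics entirely: you extract $\int\phi\,(dd^cu_j)^n=0$ for $j=0,1$, telescope to $\int d\phi\wedge d^c\phi\wedge T=0$, and finish with an $L^2$--domination argument. This is more self-contained and transports to settings where the geodesic framework is unavailable; the paper's route, by contrast, explains \emph{why} the specific list of $n{+}1$ mixed conditions is the right hypothesis (it is exactly what linearises the energy). One small cleanup: your ``B{\l}ocki-type estimate'' as stated is a bit off; the cleaner step is, with $S=(dd^cu_j)^{n-1}\le T$, to integrate by parts $\int\phi^2\,dd^cu_j\wedge S=-2\int\phi\,d\phi\wedge d^cu_j\wedge S$ and apply Cauchy--Schwarz against $d\phi\wedge d^c\phi\wedge S=0$, giving $\int\phi^2(dd^cu_j)^n=0$ directly, after which the domination principle concludes.
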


This works also for larger classes of psh functions, however $\E_0$ suffices for an application to plurisubharmonic interpolation of certain measures and sets.

\section{Relative extremal functions}

Let $K\Subset D$. Recall that the {\it relative extremal function} of $K$ w.r.t. $D$ is
$$\omega_{K}=\omega_{K,D}={\sup}^*\{u\in\PSH^-(D):\: u|_K\le -1\}$$
(here $\sup^*$ is the upper semicontinuous regularization of $\sup$).
This is a function from $\E_0(D)$, {\it maximal} outside $K$: $(dd^c \omega_K)^n=0$ on $D\setminus K$. The {\it relative capacity} of $K$ (w.r.t. $D$) can be defined as
$$\Capa(K)=(dd^c \omega_K)^n(K).$$

Let $u_t$ be the geodesic between $u_j=\omega_{K_j}$, $j=0,1$.
Consider the sets
\beq\label{intKt} \widehat K_t=\{z:\: u_t(z)=-1\},\quad 0<t<1,\eeq
then $-\BE(u_t)\ge \Capa(\widehat K_t)$ while $\BE(\omega_K)=-\Capa(K)$, so
we get

\begin{theorem}\label{Capa} {\rm \cite{R16}} For any $K_j\Subset D$,
$$\Capa(\widehat K_t)\le(1-t)\,\Capa(K_0) +t\,\Capa(K_1).$$
\end{theorem}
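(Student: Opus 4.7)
The plan is to exploit two facts already in place: the monotonicity of the energy $\BE$ (Theorem \ref{U1}) and the linearity of $\BE(u_t)$ along the geodesic (the second item of the preceding theorem on $\BE$). The strategy is to show $u_t\le \omega_{\widehat K_t}$, conclude $\BE(u_t)\le\BE(\omega_{\widehat K_t})=-\Capa(\widehat K_t)$, and then read off the right-hand side from the linearity $\BE(u_t)=(1-t)\BE(u_0)+t\BE(u_1)=-(1-t)\Capa(K_0)-t\Capa(K_1)$.

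First I would verify the setup of $\widehat K_t$. Since $\omega_{K_j}\ge -1$ in $D$, the constant function $-1$ lies in $W(u_0,u_1)$ (viewed as a $t$-independent subgeodesic), so $u_t\ge -1$ on $D$; combined with $u_t\le 0$, this gives $\widehat K_t=\{u_t=-1\}=\{u_t\le-1\}$. Because $u_t\in \E_0(D)$ (geodesics preserve $\E_0$, as noted in the excerpt), $u_t\to 0$ at $\partial D$, so $\widehat K_t\Subset D$ and its relative extremal function $\omega_{\widehat K_t}\in\E_0(D)$ is well-defined with $\BE(\omega_{\widehat K_t})=-\Capa(\widehat K_t)$.

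Next, $u_t$ is a psh negative function in $D$ satisfying $u_t\le -1$ on $\widehat K_t$, so it lies in the defining family of $\omega_{\widehat K_t}$; hence $u_t\le \omega_{\widehat K_t}$ pointwise. Theorem \ref{U1}(1) then yields $\BE(u_t)\le\BE(\omega_{\widehat K_t})=-\Capa(\widehat K_t)$, i.e.\ $-\BE(u_t)\ge\Capa(\widehat K_t)$. Finally, linearity of $\BE(u_t)$ in $t$ gives $-\BE(u_t)=(1-t)\Capa(K_0)+t\,\Capa(K_1)$, completing the inequality.

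The argument is essentially an assembly of the previously stated tools, so there is no deep obstacle. The step most worth double-checking is the comparison $u_t\le \omega_{\widehat K_t}$, which hinges on the pointwise lower bound $u_t\ge -1$; without it the set $\widehat K_t$ could be empty or behave poorly. The only other item needing attention is the identification $\BE(\omega_{\widehat K_t})=-\Capa(\widehat K_t)$, which uses that $(dd^c\omega_{\widehat K_t})^n$ is supported where $\omega_{\widehat K_t}=-1$ and does not charge pluripolar sets.
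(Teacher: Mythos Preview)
Your argument is correct and follows essentially the same route as the paper: the paper sketches the proof just before the theorem by noting $-\BE(u_t)\ge \Capa(\widehat K_t)$ and $\BE(\omega_{K_j})=-\Capa(K_j)$, then invokes linearity of $\BE$ along the geodesic. Your comparison $u_t\le \omega_{\widehat K_t}$ together with Theorem~\ref{U1} is precisely the justification for the first inequality, so you have simply filled in the details of the paper's outline.
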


One might expect that the geodesic interpolations $u_t$ of $\omega_{K_j}$ are $\omega_{\widehat K_t}$, which would replace the inequality in this theorem by equality. It turns out to be false, even in simplest examples (see also Theorem~\ref{eqtor}):

\begin{example}
{\rm Let $n=1$, $D=\D$, $K_0=\{z:\:|z|\le e^{-1}\} $, $K_1=\{z:\:|z|\le e^{-2}\} $. Then $\widehat K_t=\{z:\:|z|\le e^{-1-t}\}$, while
$$ u_t(z)=\max\left\{\log|z|, \frac{\log|z|+t-1}2,-1\right\}$$
is not a relative extremal function.
We have $\supp\, dd^c u_j=\partial K_j=\{z: \log|z|=-1-j\}$ and}
$\supp\, dd^c u_t= \{z: \log|z|=-1\pm t\}=\partial \widehat K_t\cup \{z: \log|z|=-1+t\}$.
\end{example}

While the sets $\widehat K_t$, as defined, could depend on the choice of the domain $D$, this is not actually true -- at least, in the case when $K_j$ are {\it polynomially convex}, which means that $$K_j=\{z\in\Cn: \ |P(z)|\le \|P\|_{K_j} \ \forall P\in\mathcal P\},$$
$\mathcal P$ being the collection of all polynomials. Namely, they are sections of certain holomorphic hulls of the set
 $K^\cA:=(K_0\times \cA_0) \cup (K_1\times \cA_1)\subset \C^{n+1}$ with respect to functions holomorphic in $\Cn\times (\C\setminus 0))$:

\begin{theorem} {\rm \cite{CER}}
If $K_j$ are compact and polynomially convex, then, for any $\zeta\in\cA$, $$\widehat K_{\log|\zeta|}=\{z\in\Cn: |f(z,\zeta)|\le\|f\|_{K^\cA}\quad \forall f\in{\mathcal O}(\Cn\times (\C\setminus 0))\}.$$
\end{theorem}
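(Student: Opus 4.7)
The plan is to characterize the $\cO(\Cn\times\C^*)$-hull via a polynomial test and then identify the resulting polynomial condition with the geodesic level set $\{u_t=-1\}$ using explicit subgeodesics built from polynomial data.

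Since $K^\cA$ is invariant under the $S^1$-action $(z,\zeta)\mapsto(z,e^{i\theta}\zeta)$, so is its hull. Expanding any $f\in\cO(\Cn\times\C^*)$ in its Laurent series $\sum_{k\in\Z}f_k(z)\zeta^k$ and approximating each entire $f_k$ on compacts by polynomials via Runge, the hull inequality reduces to $|P(z_0)|\,|\zeta_0|^k\le\max(\|P\|_{K_0},e^k\|P\|_{K_1})$ for all $P\in\cP(\Cn)$ and $k\in\Z$. Applying this to $(P\zeta^k)^m$, dividing by $m$, and letting $k/m\to\kappa\in\R$ yields the continuous-parameter version $\log|P(z_0)|+\kappa t_0\le\max(\log\|P\|_{K_0},\log\|P\|_{K_1}+\kappa)$ for every $\kappa\in\R$, where $t_0=\log|\zeta_0|$. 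Optimizing the right-hand side over $\kappa$ (the minimum being attained at $\kappa=\log\|P\|_{K_0}-\log\|P\|_{K_1}$) collapses this to the Hadamard-type bound
\begin{equation*}
|P(z_0)|\le\|P\|_{K_0}^{1-t_0}\|P\|_{K_1}^{t_0}\quad\forall P\in\cP(\Cn),
\end{equation*}
which is manifestly independent of $D$; call the set of such $z_0$ the polynomial slice $L_{t_0}$.

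To build subgeodesics from this data, for each $P\in\cP(\Cn)$ and $\kappa\in\R$ set
\begin{equation*}
\phi^{P,\kappa}(z,\zeta):=\log|P(z)|+\kappa\log|\zeta|-\max(\log\|P\|_{K_0},\log\|P\|_{K_1}+\kappa).
\end{equation*}
This is psh on $\Cn\times\C^*$ (as $\kappa\log|\zeta|$ is harmonic on $\C^*$), affine in $t=\log|\zeta|$, and $\le 0$ on $K^\cA$. Choosing $C>0$ large enough that $\phi^{P,\kappa}\le C$ on $\bar D\times\bar\cA$, the function $w^{P,\kappa}_t(z):=\max\bigl(\phi^{P,\kappa}(z,\zeta)/C-1,\,-1\bigr)$ is psh, convex in $t$, $\le 0$ on $D\times\cA$ and $\le -1$ on $K^\cA$; since its boundary restrictions lie in $\PSH^-(D)$ and are $\le-1$ on $K_j$, maximality of $\omega_{K_j,D}$ gives $w^{P,\kappa}_j\le\omega_{K_j,D}$, so $w^{P,\kappa}\in W(\omega_{K_0,D},\omega_{K_1,D})$. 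The zero set $\{w^{P,\kappa}_t=-1\}=\{\phi^{P,\kappa}\le 0\}$ recovers the defining inequalities of $L_t$, which yields the inclusion $\widehat K_{t_0}\subseteq L_{t_0}$ at once: if $z_0\notin L_{t_0}$, some $\phi^{P,\kappa}(z_0,\zeta_0)>0$, so $u_{t_0}(z_0)\ge w^{P,\kappa}_{t_0}(z_0)>-1$ and $z_0\notin\widehat K_{t_0}$.

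The reverse inclusion $L_{t_0}\subseteq\widehat K_{t_0}$ is the main obstacle; it amounts to the density identity $u_t=\bigl(\sup_{P,\kappa}w^{P,\kappa}_t\bigr)^*$, meaning that every subgeodesic of $(\omega_{K_0,D},\omega_{K_1,D})$ is pointwise dominated by ones built from polynomial data. This uses the polynomial convexity of $K_j$ through Siciak--Zakharyuta theory: comparing $\omega_{K_j,D}$ to the global Siciak extremal function $V_{K_j}$ via $V_{K_j}/M_D-1\le\omega_{K_j,D}$ with $M_D:=\sup_{\bar D}\max(V_{K_0},V_{K_1})$, and invoking classical polynomial approximation of $V_{K_j}$ to transfer to the family $\{w^{P,\kappa}\}$, completes the identification $\widehat K_{t_0}=L_{t_0}$.
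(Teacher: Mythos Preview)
The paper is a survey and does not itself prove this statement; it merely quotes the result from \cite{CER}. So there is no ``paper's own proof'' to compare against, and your attempt must stand on its own.

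Your Steps 1 and 2 are essentially sound: the reduction of the holomorphic hull of the $S^1$-invariant set $K^\cA$ to the Hadamard condition $|P(z_0)|\le\|P\|_{K_0}^{1-t_0}\|P\|_{K_1}^{t_0}$ is correct (though the passage from general Laurent polynomials to single monomials $P(z)\zeta^k$ deserves a line of justification), and the explicit subgeodesics $w^{P,\kappa}$ cleanly give $\widehat K_{t_0}\subseteq L_{t_0}$.

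The genuine gap is Step 3, the inclusion $L_{t_0}\subseteq\widehat K_{t_0}$. You reduce it to the ``density identity'' $u_t=\bigl(\sup_{P,\kappa}w^{P,\kappa}_t\bigr)^*$, but this identity is neither proved nor obviously true, and your Siciak--Zakharyuta sketch does not establish it. The inequality you invoke, $V_{K_j}/M_D-1\le\omega_{K_j,D}$, points the wrong way: it bounds $\omega_{K_j,D}$ \emph{from below} by polynomial data, hence bounds $u_t$ from below and feeds back into Step~2, not Step~3. What you actually need for $L_{t_0}\subseteq\widehat K_{t_0}$ is an \emph{upper} bound on the full geodesic $u_t$ (equivalently, on every subgeodesic $v\in W(\omega_{K_0,D},\omega_{K_1,D})$) that forces $v(z_0,\zeta_0)\le -1$ whenever $(z_0,\zeta_0)$ lies in the hull. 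Arbitrary subgeodesics are psh only on $D\times\cA$, not on $\Cn\times\C^*$, so the global hull property does not apply to them directly, and nothing in your sketch bridges that gap. One workable route is to identify $\widehat u(z,\zeta)=u_{\log|\zeta|}(z)$ with the relative extremal function $\omega_{K^\cA}$ in a slightly enlarged product domain and then use that, for polynomially convex $K_j$, the $-1$ level set of that relative extremal function coincides with the $\cO(\Cn\times\C^*)$-hull; but this argument is absent from your proposal.
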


\section{Toric case}

More can be said if $K_j$ are closures of complete, logarithmically convex, multicircled (Reinhardt) domains, which means that
$y\in K_j$ provided $z\in K_j$ and $|y_i|\le |z_i|$ for all $i$, and
$$\Log K_j:=\{s\in \Rn: \: \exp s=(e^{s_1},\ldots, e^{s_n})\in K_j\}$$
are convex subsets of $\Rn$.
When, in addition, the domain $D$ is multicircled, the functions $\omega_{K_j}$ are {\it toric} (multicircled), and so are the geodesics $u_t$.

Any toric  plurisubharmonic function $u(z)$ on $D$ can be identified with its {\it convex image}: the convex function $\check u(s)=u(\exp s)$ on $\Log D\subset\Rn$, increasing in each variable $s_j$. In addition, $\check u_t$ is a convex function on $\Log D\times (0,1)\subset\R^{n+1}$.

The geodesics between toric psh functions have an easy description:

\begin{theorem} \cite{Gu}, \cite{R17} The convex image $\check u_t$ of any toric geodesic $u_t$ is given by
$$\check u_t =\cL\left[ (1-t)\cL[\check u_0] + t\cL[\check u_1]\right],
$$
where $\cL$ is the {\it Legendre transform},
$$ \cL[f](y)=\sup_{x\in\Rnm }\{\langle x,y\rangle -f(x)\}.$$
\end{theorem}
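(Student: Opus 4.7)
The plan is to exploit Legendre duality to convert the problem of finding the largest toric subgeodesic—a nonlinear extremal problem—into a linear interpolation on the dual side. By the discussion preceding the statement, a toric subgeodesic corresponds to a jointly convex function $\phi(s,t)$ on $\Log D\times[0,1]$, increasing in each $s_i$, with $\phi(s,j)\le\check u_j(s)$ for $j=0,1$; the geodesic $\check u_t$ is the largest such $\phi$.

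The key observation is that the partial Legendre transform $\psi(y,t):=\cL[\phi(\cdot,t)](y)$ is \emph{concave} in $t$. Indeed, for $t_0,t_1\in[0,1]$ and $\lambda\in[0,1]$, choose (approximate) maximizers $s_0,s_1\in\Rnm$ for $\psi(y,t_0)$ and $\psi(y,t_1)$; their convex combination $s_\lambda$ is again in $\Rnm$, and joint convexity of $\phi$ gives
\[
\psi(y,t_\lambda)\ge\langle s_\lambda,y\rangle-\phi(s_\lambda,t_\lambda)\ge(1-\lambda)\bigl(\langle s_0,y\rangle-\phi(s_0,t_0)\bigr)+\lambda\bigl(\langle s_1,y\rangle-\phi(s_1,t_1)\bigr),
\]
from which $\psi(y,t_\lambda)\ge(1-\lambda)\psi(y,t_0)+\lambda\psi(y,t_1)$ upon passing to the supremum. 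The boundary inequalities $\phi(\cdot,j)\le\check u_j$ and the order-reversing property of $\cL$ give $\psi(y,j)\ge\cL[\check u_j](y)$, so concavity between $t=0$ and $t=1$ yields
\[
\psi(y,t)\ge(1-t)\cL[\check u_0](y)+t\cL[\check u_1](y).
\]
Applying $\cL$ again (reversing inequalities) and invoking Fenchel--Moreau to recover $\phi(\cdot,t)=\cL[\psi(\cdot,t)]$, one obtains
\[
\phi(s,t)\le\cL\bigl[(1-t)\cL[\check u_0]+t\cL[\check u_1]\bigr](s)=:\tilde\phi(s,t).
\]

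To close, I would verify that $\tilde\phi$ itself lies in the subgeodesic class. As a supremum of the family $(s,t)\mapsto\langle s,y\rangle-(1-t)\cL[\check u_0](y)-t\cL[\check u_1](y)$, each member jointly affine in $(s,t)$, $\tilde\phi$ is jointly convex; since the sup is effectively restricted to the common effective domain of the $\cL[\check u_j]$ (which consists of points with nonnegative components, by monotonicity of $\check u_j$), $\tilde\phi$ is increasing in each $s_i$. Fenchel--Moreau yields $\tilde\phi(\cdot,j)=\cL^2[\check u_j]=\check u_j$, so the boundary conditions are matched with equality; combined with the upper bound above, this identifies $\tilde\phi=\check u_t$.

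The main care point is the Legendre bookkeeping: since $\cL$ is defined as a supremum over $\Rnm$, the transforms $\cL[\check u_j]$ are naturally finite only on $\Rnp$, and the outer $\cL$ in the formula must be read as the inverse transform, i.e.\ a supremum over this dual cone. One also needs $\check u_j$ to be proper, convex and lower semicontinuous on $\Log D$ in order to invoke Fenchel--Moreau; both properties follow from $u_j\in\E_0(D)$ (or more generally $u_j\in\PSH\cap L^\infty$) by direct check. Once these conventions are fixed, the argument is a clean exercise in convex duality.
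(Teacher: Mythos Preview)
The paper is a survey and does not supply its own proof of this statement; the theorem is quoted with references to \cite{Gu} and \cite{R17}. So there is nothing in the present paper to compare against, and your argument must stand on its own.

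It does. Your proof is the standard convex-duality argument and is essentially what one finds in \cite{R17}: pass to the partial Legendre transform in the space variable, use joint convexity of $\phi(s,t)$ to deduce that $\psi(y,t)=\cL[\phi(\cdot,t)](y)$ is concave in $t$, bound it below by the linear interpolation of $\cL[\check u_0]$ and $\cL[\check u_1]$, transform back, and then verify that the candidate $\tilde\phi$ is itself an admissible subgeodesic meeting the boundary data. The two care points you isolate---reading the outer $\cL$ as the inverse transform over the dual cone $\Rnp$, and checking the hypotheses of Fenchel--Moreau for $\check u_j$---are exactly the ones that need attention.

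One small sharpening at the Fenchel--Moreau step. You invoke $\phi(\cdot,t)=\cL[\psi(\cdot,t)]$ for an arbitrary subgeodesic $\phi$; strictly speaking this requires $\phi(\cdot,t)$ to coincide with its closed convex hull. Since $\phi(\cdot,t)$ is a slice of a jointly convex function it is convex on the open set $\Log D$, hence continuous there, and (because it is increasing in each $s_i$) its conjugate is $+\infty$ off $\overline{\Rnp}$; so the biconjugate over $\Rnp$ already agrees with $\phi(\cdot,t)$ at every interior point, and no extra regularity is needed. Alternatively, one can bypass this entirely by applying the upper-bound half of the argument only to the geodesic $\check u_t$ itself (which lies in $\E_0$ and is manifestly lsc), and using the admissibility of $\tilde\phi$ for the other inequality. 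Either way the proof is complete.
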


We can assume $D=\D^n$, the unit polydisk. By \cite{ARZ},
$$ \omega_{K_j}(z)=\sup_{a\in\Rnp}\frac{\sum a_k\log|z_k|}{|h_{L_j}(a)|}, \quad z\in \D^n\setminus K_j,$$
where
$ L_j=\Log K_j\subset\Rnm=\{s\in\Rn:\: s_j\le 0,\ 1\le j\le n\}$ and
$$h_{L_j}(a)=\sup_{s\in L_j} \langle a,s\rangle, \quad a\in\Rn,$$
is the support function of $L_j$. Then $\cL[\check \omega_{K_j}]=\max\{h_{L_j}+1,0\}$, which gives an explicit formula for the geodesic $u_t$  of $u_j=\omega_{K_j}$:
$$ \check u_t=\cL[(1-t)\max\{h_{L_0}+1,0\} + t \max\{h_{L_1}+1,0\}].$$
As a consequence, we get

\begin{theorem}\label{widehat} {\rm\cite{R17}}
 Let $u_t$ be the geodesic between $u_j=\omega_{K_j}$ for complete, logarithmically convex Reinhardt sets $K_j\Subset \D^n$. Then
the sets $\widehat K_t$ defined by (\ref{intKt}) are the geometric means of $K_j$: $\widehat K_t=K_0^{1-t}K_1^t$; in other words, $\Log \widehat K_t=(1-t)\Log K_0+t\,\Log K_1$.
\end{theorem}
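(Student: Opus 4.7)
The plan is to combine the explicit Legendre-transform formula for toric geodesics (the first theorem in this section, cited from \cite{Gu}, \cite{R17}) with the standard convex-analytic identity that the conjugate of a sum equals the infimal convolution of the conjugates. This will reformulate $\check u_t$ as an infimum over affine decompositions of $s$, directly exposing the coincidence set $\{\check u_t = -1\}$.

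Starting from $\check u_t = \cL\bigl[(1-t)\cL[\check\omega_{K_0}] + t\,\cL[\check\omega_{K_1}]\bigr]$, I will apply $\cL[f_1+f_2] = \cL[f_1]\,\square\,\cL[f_2]$ (inf-convolution), $\cL[cf](s) = c\,\cL[f](s/c)$ for $c>0$, together with biduality $\cL\cL = \mathrm{id}$ on convex lsc functions, to rewrite
\[
\check u_t(s) \;=\; \inf\Bigl\{(1-t)\check\omega_{K_0}(\tilde s_0) + t\,\check\omega_{K_1}(\tilde s_1) : s = (1-t)\tilde s_0 + t\,\tilde s_1,\ \tilde s_j\in\Rnm\Bigr\}.
\]
Since $-1 \le \check\omega_{K_j} \le 0$ with $\{\check\omega_{K_j} = -1\} = L_j$ (the characterization following from polynomial convexity of the complete logarithmically convex Reinhardt $K_j$), each summand of the infimand is $\ge -1$, with equality iff $\tilde s_j \in L_j$. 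For $s \in L_t := (1-t)L_0 + tL_1$, a decomposition with $\tilde s_j \in L_j$ attains value $-1$, giving $\check u_t(s) = -1$. For $s \in \Rnm\setminus L_t$, no such decomposition exists; moreover the constraints $\tilde s_j \in \Rnm$ and $(1-t)\tilde s_0 + t\tilde s_1 = s$ confine each component $\tilde s_{j,i}$ to a closed bounded interval, so I will close the argument with a compactness plus continuity step on $\check\omega_{K_j}$ to upgrade this to the strict bound $\check u_t(s) > -1$. This yields $\Log\widehat K_t = L_t$, i.e., $\widehat K_t = K_0^{1-t}K_1^t$.

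The conceptual key is the sup-to-inf swap via conjugacy, which bypasses the more delicate direct estimate via separating hyperplanes. The technical points to check are (i) biduality of $\cL$ in the $\Rnm \leftrightarrow \Rnp$ setting, which is routine once the convex functions are extended by $+\infty$ outside their natural domains, and (ii) the strict inequality off $L_t$, which is where I expect the main obstacle. The latter reduces to the fact that any infimizing sequence $(\tilde s_0^{(k)},\tilde s_1^{(k)})$ lies in a fixed compact box; since $\check\omega_{K_j}\to 0$ on $\partial\Rnm$, any subsequential limit must lie in the interior of $\Rnm$, and by continuity it must land in $L_j$, forcing $s\in L_t$ and thus a contradiction.
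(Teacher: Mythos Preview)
Your argument is correct. Both you and the paper start from the Legendre-transform description of toric geodesics, $\check u_t=\cL\bigl[(1-t)\cL[\check\omega_{K_0}]+t\,\cL[\check\omega_{K_1}]\bigr]$, but you then diverge in execution. The paper computes $\cL[\check\omega_{K_j}]=\max\{h_{L_j}+1,0\}$ explicitly and reads off the $-1$ level set of the resulting transform via the support functions $h_{L_j}$ (so the identification $\Log\widehat K_t=(1-t)L_0+tL_1$ comes from the additivity $h_{(1-t)L_0+tL_1}=(1-t)h_{L_0}+th_{L_1}$). You instead invoke the general identity $(f_1+f_2)^*=f_1^*\,\square\,f_2^*$ together with the scaling rule and biduality to obtain directly
\[
\check u_t(s)=\inf\bigl\{(1-t)\check\omega_{K_0}(\tilde s_0)+t\,\check\omega_{K_1}(\tilde s_1):\ s=(1-t)\tilde s_0+t\tilde s_1\bigr\},
\]
from which the level set drops out by the compactness/continuity argument you outline. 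Your route is a bit more conceptual and avoids the case split hidden in the $\max\{\,\cdot\,,0\}$; the paper's explicit formula, on the other hand, yields a closed-form expression for $\check u_t$ that is reused later (e.g.\ in Theorems~\ref{eqtor} and~\ref{weighted}). One small wording fix: in your last paragraph, the subsequential limit need not lie in the interior a priori; rather, \emph{if} the infimum were $-1$ the limit would be forced into $L_0\times L_1\subset\operatorname{int}\Rnm\times\operatorname{int}\Rnm$, giving the contradiction --- but your compactness bound and the continuous extension of $\check\omega_{K_j}$ by $0$ to $\partial\Rnm$ already deliver exactly that.
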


One can show that, in the toric case, there can never be an equality in Theorem~\ref{Capa}, unless the geodesic is constant:

\begin{theorem}\label{eqtor} {\rm\cite{R17}}
In the conditions of Theorem~\ref{widehat}, if there exists $0<t_0<1$ such that
$u_{t_0}=\omega_{K_{t_0}}$, then  $u_0=u_1$.
\end{theorem}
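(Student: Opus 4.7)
The plan is to transfer everything to the convex side via the Legendre transform $\cL$ and reduce the assumed equality to a pointwise identity between two explicit convex functions on $\Rnp$. By the formula for $\check u_t$ displayed just before Theorem~\ref{widehat},
$$\check u_{t_0} = \cL[f_{t_0}], \quad f_{t_0}(a) := (1-t_0)\max\{h_{L_0}(a)+1,\,0\} + t_0 \max\{h_{L_1}(a)+1,\,0\}.$$
On the other hand, Theorem~\ref{widehat} gives $\Log \widehat K_{t_0} = (1-t_0)L_0 + t_0 L_1$, whose support function is $(1-t_0)h_{L_0} + t_0 h_{L_1}$; applying the same representation with $\widehat K_{t_0}$ in place of $K_j$ therefore yields $\check\omega_{\widehat K_{t_0}} = \cL[g_{t_0}]$, where
$$g_{t_0}(a) := \max\{(1-t_0)h_{L_0}(a) + t_0 h_{L_1}(a) + 1,\,0\}.$$

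Both $f_{t_0}$ and $g_{t_0}$ are convex and continuous on $\Rnp$; since each $L_j$ is downward closed in $\Rnm$, we have $h_{L_j}\equiv +\infty$ off $\Rnp$, so it is natural to extend $f_{t_0}$ and $g_{t_0}$ by $+\infty$ there. They are then proper lsc convex functions on $\Rn$, and Fenchel--Moreau gives $\cL\cL = \operatorname{id}$ on this class. Consequently, the hypothesis $u_{t_0} = \omega_{\widehat K_{t_0}}$ is equivalent to the pointwise identity $f_{t_0} \equiv g_{t_0}$ on $\Rnp$.

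By convexity of $\max(\cdot,\,0)$, one always has $g_{t_0} \le f_{t_0}$ pointwise, and a brief case check shows that equality at a given point $a$ holds precisely when $h_{L_0}(a)+1$ and $h_{L_1}(a)+1$ have the same sign (if they differ in sign one gets $f_{t_0}(a) > g_{t_0}(a)$). Hence $f_{t_0}\equiv g_{t_0}$ forces the sets $A_j := \{a\in\Rnp : h_{L_j}(a)\ge -1\}$ to coincide. Since $h_{L_j}$ is positively $1$-homogeneous and $\le 0$ on $\Rnp$, the set $A_j$ determines $h_{L_j}$ throughout $\Rnp$ (each ray from the origin meets $\partial A_j$ at a unique point where $h_{L_j} = -1$, and homogeneity fixes the remaining values). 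Combined with $h_{L_j}\equiv +\infty$ off $\Rnp$, this yields $h_{L_0} \equiv h_{L_1}$ on $\Rn$, whence $L_0 = L_1$, so $K_0 = K_1$ and $u_0 = u_1$.

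The principal technical point is the injectivity step in the second paragraph: one must check carefully that both $f_{t_0}$ and $g_{t_0}$ land in a class on which $\cL$ is a genuine involution, so that equality of the Legendre transforms really pulls back to equality of the originals. Once that is in place, the remaining pointwise analysis is elementary convex geometry applied to support functions.
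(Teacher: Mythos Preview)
The paper is a survey and does \emph{not} prove Theorem~\ref{eqtor}; it only cites \cite{R17}. So there is no in-paper proof to compare against. Your argument is nevertheless the natural one in the convex-analytic framework of Section~4, and it is essentially correct. Two points deserve a cleaner write-up.

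\textbf{The Legendre inversion.} The paper's $\cL$ is defined with $\sup_{x\in\Rnm}$, i.e.\ it maps functions on $\Rnm$ to functions on (a subset of) $\Rn$; the outer $\cL$ in the formula for $\check u_t$ is therefore tacitly the dual transform going the other way. Your remedy---extend $f_{t_0},g_{t_0}$ by $+\infty$ off $\Rnp$ and use the ordinary Fenchel--Moreau theorem---is the right one, but you should also note that $f_{t_0}^*$ and $g_{t_0}^*$ are automatically $+\infty$ off $\Rnm$ (take $y=Ne_k$ and use $h_{L_j}(e_k)<0$), so that equality of the transforms on $\Rnm$ really does force equality on all of $\Rn$. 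Both $f_{t_0}$ and $g_{t_0}$ are clearly proper, convex, and lsc (the only nontrivial check is lsc at $\partial\Rnp$, which follows since no point outside $\Rnp$ is a limit of points in $\Rnp$), so Fenchel--Moreau applies.

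\textbf{The ``same sign'' step.} Your statement that $f_{t_0}(a)=g_{t_0}(a)$ iff $h_{L_0}(a)+1$ and $h_{L_1}(a)+1$ ``have the same sign'' is slightly imprecise when one of them vanishes: the exact condition is $(h_{L_0}(a)+1)(h_{L_1}(a)+1)\ge 0$. This does not directly give $A_0=A_1$, but the ray argument you sketch closes the gap cleanly: for fixed $a_0\in\Rnp\setminus\{0\}$ set $c_j=-h_{L_j}(a_0)>0$; then the condition reads $(1-tc_0)(1-tc_1)\ge 0$ for all $t>0$, which forces $c_0=c_1$. Hence $h_{L_0}\equiv h_{L_1}$ on $\Rnp$, and the conclusion follows. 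With these two clarifications your proof is complete.
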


Recall that volumes of convex combinations $K(t)=(1-t)K_0+t\,K_1$ of convex bodies $K_j\subset \Rn$  satisfy the Brunn-Minkowski inequality
$$ \Vol(K(t))\ge \Vol(K_0)^{1-t}\,\Vol(K_1)^t;$$
in other words, volumes of $K(t)$ are {\it logarithmically concave}. The same is true  for the multiplicative combinations $\widehat K_t=K_0^{1-t}K_1^t $ of convex Reinhardt bodies $K_j\subset\Cn$ \cite{CE}.

The geodesic interpolation gives us a {\sl reverse estimate}: the (usual) {\it convexity} of the capacities
$$\Capa(K_t)\le(1-t)\,\Capa(K_0) +t\,\Capa(K_1)$$
for logarithmically convex Reinhardt bodies $K_j$.

\section{Weighted extremal functions}

One can get a stronger relation between the capacities if the functions $u_j=\omega_{K_j}$ are replaced with weighted ones $u_j=c_j\,\omega_{K_j}$ for some $c_j>0$.

Let $u_t^c$ be the corresponding geodesic.
The sets $\widehat K_t$ are now to be defined as
$$\widehat K^c_t=\{z\in \Omega:\: u^c_t(z)=m_t\},\quad  0< t< 1, $$
where $m_t=\min\{u_t^c(z):\: z\in \Omega\}$.

It can be shown that $m_t$ is a concave function and
$$|m_t|\le c_t:=(1-t)\,c_0+t\,c_1.$$
Moreover, if $K_0\cap K_1\neq\emptyset$, then $|m_t|=c_t$.

\begin{theorem}\label{gentheo} {\rm \cite{R21}} Let $K_j\Subset D$ be polynomially convex, $K_0\cap K_1\neq\emptyset$, and let the weights $c_j$ be chosen such that
$$ c_0^{n+1}\Capa(K_0)=c_1^{n+1}\Capa(K_1).$$
Then
$$\left(\Capa(\widehat K_t^c)\right)^{-\frac1{n+1}}\ge (1-t) \left(\Capa(\widehat K_t^c)\right)^{-\frac1{n+1}}+t \left(\Capa(\widehat K_t^c)\right)^{-\frac1{n+1}}.$$
\end{theorem}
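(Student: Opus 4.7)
The plan is to combine the linearity of the Monge--Amp\`ere energy along $\E_0$-geodesics with the monotonicity in Theorem \ref{U1}. Put $M:=c_0^{n+1}\Capa(K_0)=c_1^{n+1}\Capa(K_1)$. Since $u_j=c_j\omega_{K_j}\in\E_0(D)$, the geodesic $u_t^c$ lies in $\E_0(D)$, is non-positive, and the map $t\mapsto\BE(u_t^c)$ is affine with endpoint values $\BE(c_j\omega_{K_j})=c_j^{n+1}\BE(\omega_{K_j})=-c_j^{n+1}\Capa(K_j)=-M$, so $\BE(u_t^c)\equiv -M$ on $(0,1)$.

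Next I would insert $\omega_{\widehat K_t^c}$ as a competitor. By construction, $u_t^c/|m_t|$ is a negative psh function on $D$ taking the value $-1$ on $\widehat K_t^c$, so $u_t^c/|m_t|\le\omega_{\widehat K_t^c}$, that is, $u_t^c\le |m_t|\,\omega_{\widehat K_t^c}$. Monotonicity of the energy (Theorem \ref{U1}) then yields $-M=\BE(u_t^c)\le\BE(|m_t|\,\omega_{\widehat K_t^c})=-|m_t|^{n+1}\Capa(\widehat K_t^c)$, i.e.\ $M\ge|m_t|^{n+1}\Capa(\widehat K_t^c)$. The hypothesis $K_0\cap K_1\neq\emptyset$ now produces the sharp equality $|m_t|=c_t:=(1-t)c_0+tc_1$ (stated in the paragraph preceding the theorem), which promotes the previous bound to $\Capa(\widehat K_t^c)^{-1/(n+1)}\ge c_t\,M^{-1/(n+1)}$.

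To finish, rewrite the balance hypothesis as $\Capa(K_j)^{-1/(n+1)}=c_j\,M^{-1/(n+1)}$; then the right-hand side decomposes as $c_tM^{-1/(n+1)}=(1-t)\Capa(K_0)^{-1/(n+1)}+t\Capa(K_1)^{-1/(n+1)}$, which is the claimed $-1/(n+1)$-concavity of the capacity along $t\mapsto\widehat K_t^c$ (reading the right-hand side of the statement as $(1-t)\Capa(K_0)^{-1/(n+1)}+t\Capa(K_1)^{-1/(n+1)}$).

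The main obstacle is not algebraic but conceptual: the whole argument pivots on the sharp equality $|m_t|=c_t$, which is what converts monotonicity of $\BE$ into a power-mean inequality for capacities. Without the intersection hypothesis only $|m_t|\le c_t$ holds, and since $|m_t|^{n+1}$ appears multiplying $\Capa(\widehat K_t^c)$ on the smaller side, a mere upper bound for $|m_t|$ propagates in the wrong direction and produces nothing. I would therefore take care to invoke $K_0\cap K_1\neq\emptyset$ honestly at this step---presumably by noting that at a point $z^\ast\in K_0\cap K_1$ one has $u_j(z^\ast)=-c_j$, so convexity of $u_t^c$ in $t$ gives $u_t^c(z^\ast)\le-(1-t)c_0-tc_1=-c_t$, whence $|m_t|\ge c_t$ and hence equality.
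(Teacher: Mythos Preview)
The paper is a survey and does not supply a proof of this theorem; it simply quotes the result from \cite{R21}. Your argument is correct and is precisely the one given in \cite{R21}: linearity of $\BE$ along the $\E_0$-geodesic forces $\BE(u_t^c)\equiv -M$, the comparison $u_t^c\le |m_t|\,\omega_{\widehat K_t^c}$ together with Theorem~\ref{U1} gives $M\ge |m_t|^{n+1}\Capa(\widehat K_t^c)$, and the intersection hypothesis upgrades $|m_t|\le c_t$ to equality via the convexity-in-$t$ estimate you wrote at the end. You also correctly read the intended right-hand side of the displayed inequality as $(1-t)\Capa(K_0)^{-1/(n+1)}+t\,\Capa(K_1)^{-1/(n+1)}$; the repeated $\widehat K_t^c$ in the statement is a typographical slip.
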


A little drawback of this result is that the sets $\widehat K_t^c$ depend on the parameters $c_j$. It turns out not to be the case in the toric situation, and the capacity inequality becomes the one on concavity of the function
$\left(\Capa(\widehat K_t)\right)^{-\frac1{n+1}}$:

\begin{theorem}\label{weighted} {\rm \cite{R21}}
If $K_j\subset\D^n$ are closures of complete, logarithmically convex Reinhardt domains, then $\widehat K_t^c=\widehat K_t=K_0^{1-t}K_1^t$ for any weights $c_j>0$. Furthermore, the geodesic $u_\tau^c$ equals $c_\tau\,\omega_{K_\tau}$ for some $\tau\in (0,1)$ if and only if
$K_1^{c_0}=K_0^{c_1}$ (that is, $c_0\,\Log K_1 =c_1\,\Log K_0$) and so, $u_t^c=c_t\,\omega_{K_t}$ for all $t$.
\end{theorem}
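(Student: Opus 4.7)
The plan is to pull everything back through the Legendre transform to a problem in convex analysis on $\Rn$. Using the scaling $\cL[cf](y)=c\,\cL[f](y/c)$ ($c>0$) together with 1-homogeneity of the support function and $\cL[\check\omega_{K_j}] = \max\{h_{L_j}+1,0\}$ from Section~4, I would first compute
$$\cL[c_j\check\omega_{K_j}](y) = \max\{h_{L_j}(y)+c_j,\,0\}, \qquad L_j := \Log K_j \subset \Rnm.$$
The Legendre-transform description of toric geodesics then gives $\check u_t^c = \cL[\phi_t^c]$ with
$$\phi_t^c(y) = (1-t)\max\{h_{L_0}(y)+c_0,0\} + t\max\{h_{L_1}(y)+c_1,0\}, \quad y\in\Rnp.$$

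For the first assertion, note that $\phi_t^c(0)=c_t$, so $\check u_t^c(x) \ge -c_t$ for every $x\in\Rnm$ (take $y=0$ in the supremum defining $\cL[\phi_t^c](x)$), with equality precisely when $x\in\partial\phi_t^c(0)$. Since $c_j>0$, each $f_j(y):=\max\{h_{L_j}(y)+c_j,0\}$ agrees with $h_{L_j}(y)+c_j$ on a neighbourhood of the origin, and $\partial h_{L_j}(0)=L_j$, so the Moreau--Rockafellar sum rule gives
$$\partial\phi_t^c(0) = (1-t)L_0 + t L_1 = \Log K_t,$$
yielding $m_t=-c_t$ and $\widehat K_t^c = K_0^{1-t}K_1^t$ for every choice of weights.

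For the second assertion, apply $\cL$ to the equation $u_\tau^c = c_\tau\omega_{K_\tau}$ and use $h_{L_\tau} = (1-\tau)h_{L_0}+\tau h_{L_1}$; it becomes the pointwise identity
$$(1-\tau)\max\{a_0,0\} + \tau\max\{a_1,0\} = \max\{(1-\tau)a_0+\tau a_1,0\}, \qquad a_j(y):=h_{L_j}(y)+c_j,$$
for all $y\in\Rnp$. A direct case check (if $a_0(y)>0>a_1(y)$ the LHS strictly exceeds the RHS, and symmetrically) shows $a_0(y)$ and $a_1(y)$ must have the same sign everywhere, equivalently
$$\{y\in\Rnp : h_{L_0/c_0}(y)\ge -1\} = \{y\in\Rnp : h_{L_1/c_1}(y)\ge -1\}.$$
The technically most delicate step is then to upgrade this level-set equality to $L_0/c_0=L_1/c_1$. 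Setting $N_j=-L_j/c_j\subset\Rnp$ (upward closed, since $K_j$ is complete Reinhardt) and $g_j(y):=\inf_{u\in N_j}\langle u,y\rangle=-h_{L_j/c_j}(y)$, positive 1-homogeneity of $g_j$ recovers $g_j$ from its unit sublevel set, and a Hahn--Banach separation argument --- whose separating direction must lie in $\Rnp$ precisely because each $N_j$ is upward closed --- recovers $N_j$ from $g_j$. Thus $L_0/c_0=L_1/c_1$, i.e., $K_0^{c_1}=K_1^{c_0}$. Conversely, this proportionality immediately yields $c_0\omega_{K_0}=c_1\omega_{K_1}$ (via $\omega_{K^a}=a^{-1}\omega_K$) and, likewise, $c_t\omega_{K_t}=c_0\omega_{K_0}$ for all $t$, so the geodesic is the constant family and equals $c_t\omega_{K_t}$ pointwise.
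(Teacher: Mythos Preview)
This survey does not contain a proof of Theorem~\ref{weighted}; the result is simply quoted from \cite{R21}. There is therefore no proof in the present paper against which to compare your proposal.

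For what it is worth, your argument is correct and follows exactly the route one would expect from the machinery assembled in Section~4: reduce to convex analysis via the Legendre transform, compute $\cL[c_j\check\omega_{K_j}]=\max\{h_{L_j}+c_j,0\}$ using the scaling $\cL[cf](y)=c\,\cL[f](y/c)$ and $1$-homogeneity of $h_{L_j}$, identify $\Log\widehat K_t^c$ with the subdifferential $\partial\phi_t^c(0)=(1-t)L_0+tL_1$, and then, for the second assertion, read the identity $u_\tau^c=c_\tau\omega_{K_\tau}$ on the Legendre side as a pointwise equality
\[
(1-\tau)\max\{a_0,0\}+\tau\max\{a_1,0\}=\max\{(1-\tau)a_0+\tau a_1,0\}
\]
which forces $a_0=h_{L_0}+c_0$ and $a_1=h_{L_1}+c_1$ to have the same sign everywhere on $\Rnp$. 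The one genuinely delicate point---upgrading equality of the level sets $\{h_{L_j/c_j}\ge -1\}$ to $L_0/c_0=L_1/c_1$---you handle correctly: upward closedness of $N_j=-L_j/c_j$ (coming from completeness of the Reinhardt domains) is precisely what forces any separating hyperplane to have its normal in $\Rnp$, so that $g_j(y)=\inf_{u\in N_j}\langle u,y\rangle$, restricted to $\Rnp$, determines $N_j$. One small remark: when you invoke $\partial h_{L_j}(0)=L_j$ and the Moreau--Rockafellar sum rule, you are tacitly using that $L_j$ is closed convex and that $h_{L_j}$ is finite and continuous at $0$; both follow from the standing hypotheses on $K_j$.
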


Since the concavity of $v^{-1}(t)$ implies convexity of $v(t)$ and since the function $x\mapsto x^{1+\frac1n}$ is convex, the conclusion of Theorem~\ref{weighted} is stronger than convexity  of  $(\Capa(\widehat K_t))^{\frac1{n}}$, and the latter is equivalent to logarithmic convexity of $\Capa(\widehat K_t)$. This implies

\begin{corollary} {\rm \cite{R21}} In the conditions of Theorem~\ref{weighted},
$$\Capa(\widehat K_t)\le \Capa(K_0)^{1-t}\Capa(K_1)^t.$$
\end{corollary}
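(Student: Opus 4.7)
The plan is to derive the log-convexity of $\Capa(\widehat K_t)$ by combining Theorem~\ref{gentheo} with the weight-independence of $\widehat K_t^c$ supplied by Theorem~\ref{weighted}, and then converting the resulting ``inverse $(n+1)$-st root concavity'' into log-convexity by an application of the weighted AM--GM inequality. The toric structure does essentially all the work; only elementary inequalities remain.

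First I would check that Theorem~\ref{gentheo} is applicable. The sets $K_j\subset\D^n$ are closures of complete, logarithmically convex Reinhardt domains, hence polynomially convex (each is cut out by monomial inequalities), and being complete Reinhardt they both contain the origin, so $K_0\cap K_1\neq\emptyset$ holds automatically. Assuming $\Capa(K_j)>0$ (the degenerate case is immediate), choose weights $c_0,c_1>0$ with $c_0^{n+1}\Capa(K_0)=c_1^{n+1}\Capa(K_1)$ and apply Theorem~\ref{gentheo}. Theorem~\ref{weighted} asserts that $\widehat K_t^c=\widehat K_t$ regardless of the weights, so the conclusion of Theorem~\ref{gentheo} becomes
$$
\Capa(\widehat K_t)^{-\frac{1}{n+1}} \;\ge\; (1-t)\,\Capa(K_0)^{-\frac{1}{n+1}} + t\,\Capa(K_1)^{-\frac{1}{n+1}}.
$$

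Now I would convert this estimate into the desired log-convexity. Setting $a=\Capa(K_0)^{-1/(n+1)}$ and $b=\Capa(K_1)^{-1/(n+1)}$, weighted AM--GM gives $(1-t)a+tb\ge a^{1-t}b^{t}$; raising both sides to the power $-(n+1)$ (which reverses inequalities between positive numbers) yields
$$
\bigl[(1-t)a+tb\bigr]^{-(n+1)} \;\le\; \Capa(K_0)^{1-t}\Capa(K_1)^{t}.
$$
Combining this with the reciprocated form of the inequality above gives
$$
\Capa(\widehat K_t) \;\le\; \bigl[(1-t)a+tb\bigr]^{-(n+1)} \;\le\; \Capa(K_0)^{1-t}\Capa(K_1)^{t},
$$
as required.

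There is no real obstacle: the only subtle point is that one must use Theorem~\ref{weighted} to identify $\widehat K_t^c$ with the weight-free set $\widehat K_t$, since otherwise Theorem~\ref{gentheo} would only control a family of sets depending on $c_0,c_1$ and could not be iterated to recover the desired two-parameter endpoint values $\Capa(K_0)$ and $\Capa(K_1)$ simultaneously. Equivalently, one can phrase the argument as in the remark preceding the corollary: concavity of $t\mapsto \Capa(\widehat K_t)^{-1/(n+1)}$ forces convexity of $t\mapsto \Capa(\widehat K_t)^{1/(n+1)}$, then convexity of $x\mapsto x^{(n+1)/n}$ upgrades this to convexity of $\Capa(\widehat K_t)^{1/n}$, and the same AM--GM step sharpens it further to log-convexity.
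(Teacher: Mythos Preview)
Your proof is correct and follows essentially the same route as the paper: the paper's argument is the sentence immediately preceding the corollary, which deduces log-convexity of $\Capa(\widehat K_t)$ from the concavity of $\Capa(\widehat K_t)^{-1/(n+1)}$ obtained via Theorem~\ref{gentheo} together with the identification $\widehat K_t^c=\widehat K_t$ from Theorem~\ref{weighted}. Your AM--GM step makes that deduction explicit and clean, and your final paragraph even restates the paper's phrasing of the same implication.
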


\section{Geodesics in $\F_1$}

To get the above results on geodesics, including the linearity of the energy functional $\BE$, extended to larger classes of psh functions, one should stick to those where the functional is still finite. This leads to considering {\it Cegrell's energy classes}, of which the simplest one is the class $\F_1$.

Let $D$ be a bounded hyperconvex domain in $\Cn$.

\begin{definition} {\rm \cite{Ce03}, \cite{Ce04}}
 The class $\F(D)$ is formed by all $u\in\PSH^-(D)$ that are limits of decreasing sequences $u_N\in\E_0(D)$ such that
$$\sup_N \int_D (dd^cu_N)^n<\infty.$$

If, in addition, $\sup_N \int_D |u_N| (dd^cu_N)^n<\infty$, then $u\in\F_1(D)$.

A function $v\in\PSH^-(D)$ belongs to $\E(D)$ if for any $D'\Subset D$ there exists $u\in\F(D)$ coinciding with $v$ on $D'$.
\end{definition}

For any $u\in\F_1(D)$, $(dd^cu)^n=\lim_{N\to\infty} (dd^cu_N)^n$, $ u(dd^cu)^n=\lim_{N\to\infty} u_N(dd^cu_N)^n$,
and $ \BE(u)=\lim_{N\to\infty} \BE(u_N)$. Then, given $u_j\in \F_1(D)$, we approximate them by $u_{j,N}\in\E_0(D)$, find the geodesics $u_{t,N}$,
and then we can look at $u_t=\lim u_{N,t}$ as $N\to\infty$.

A piece missing from the bounded case is an argument for $u_t$ to converge to $u_j$ as $t\to j$, since one cannot have $L^\infty$-bounds now.
Instead, a rooftop technique can be used. Since it will be playing a central role in the rest of the exposition, we will present it in the next section, while here we just state a result on $\F_1$-geodesics based on that technique.

Let $P(u,v)$ be the {\it rooftop envelope} (the largest psh minorants of $\min\{u,v\}$). Then, for any $u_0, u_1\in\PSH^-(D)$ and any $C\ge 0$, the curve $w_{C,t}=P(u_0,u_1+C)-Ct$, $0<t<1$, is evidently a subgeodesic, and it plays the role of the subgeodesics $V_t$ (\ref{VtM}) from the bounded case.

\begin{theorem}\label{F1} {\rm \cite{R16}} Let $u_0,u_1\in\F_1(D)$. Then
\begin{enumerate}
\item for any subgeodesic $v_t\subset \F_1(D)$, the function $t\mapsto \BE(v_t)$ is convex;
\item for the geodesic $u_t$, the function $t\mapsto \BE(u_t)$ is affine;
\item $u_t\to u_j$ in capacity as $t\to j\in\{0,1\}$: $\forall\epsilon>0$, $\Capa\{|u_t-u_j|\ge\epsilon\}\to 0$.
\end{enumerate}
\end{theorem}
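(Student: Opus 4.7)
The plan is to reduce (1) and (2) to the $\E_0$-case by approximation, and to use the rooftop subgeodesic mentioned just before the statement to obtain (3).

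For (1) and (2), I would take the defining decreasing sequences $u_{j,N}\in\E_0(D)$ with $\sup_N\int_D|u_{j,N}|(dd^cu_{j,N})^n<\infty$ converging to $u_j\in\F_1(D)$, and let $u_{t,N}$ denote the $\E_0$-geodesic joining $u_{0,N}$ and $u_{1,N}$. Since the upper envelope defining the geodesic is monotone in the boundary data, $u_{t,N}$ is decreasing in $N$. A standard sandwich argument identifies its limit with $u_t$: any $v_t\in W(u_0,u_1)$ lies in $W(u_{0,N},u_{1,N})$ and so lies below $u_{t,N}$, while the decreasing limit of the $u_{t,N}$ belongs to $W(u_0,u_1)$. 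By the preceding $\E_0$-theorem (cited right before Definition of $\F_1$), $\BE(u_{t,N})$ is affine in $t$ and $\BE(v_{t,N})$ is convex for subgeodesics; the already-quoted $\F_1$-continuity $\BE(w_N)\to\BE(w)$ along decreasing $\F_1$-approximations then yields the affineness of $\BE(u_t)$ and the convexity of $\BE(v_t)$. For a general $v_t\in\F_1$-subgeodesic, one first produces an $\E_0$-subgeodesic approximation by considering $V(z,\zeta)=v_{\log|\zeta|}(z)\in\PSH(D\times\cA)$ and passing to $V_N=\max\{V,N\pi^*\rho\}$ with $\rho$ a negative exhaustion of $D$, which stays rotation-invariant in $\zeta$ and restores zero boundary values in $z$.

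For (3), I would sandwich the geodesic. On one side, the curve $w_{C,t}=P(u_0,u_1+C)-Ct$ is a subgeodesic (it is a psh function of $z$ plus a linear function of $t$) with $w_{C,0}=P(u_0,u_1+C)\le u_0$ and $w_{C,1}=P(u_0,u_1+C)-C\le u_1$, so $u_t\ge w_{C,t}$. On the other side, convexity of $u_t$ in $t$ together with the correct boundary limits gives $u_t\le(1-t)u_0+tu_1$ quasi-everywhere. Letting $C\to\infty$ with $Ct\to 0$ and using the rooftop fact $P(u_0,u_1+C)\nearrow u_0$ (to be developed in the next section) sends both bounds to $u_0$; applying the dual subgeodesic $P(u_0+C,u_1)-C(1-t)$ handles $t\to 1$.

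The main obstacle is upgrading the pointwise/quasi-everywhere convergence in (3) to convergence in capacity, since no $L^\infty$-bound is available as in the bounded case. My plan here is to combine the exact energy identity from (2), $\BE(u_t)=(1-t)\BE(u_0)+t\BE(u_1)$, with a standard $\F_1$ capacity estimate: the set $\{|u_t-u_j|\ge\epsilon\}$ has capacity controlled by an energy difference of the form $\BE(u_t)-\BE(\max\{u_t,u_j-\epsilon\})$, which vanishes in the limit $t\to j$ thanks to the affine energy profile and the rooftop sandwich. This interplay between energy control and rooftop approximation, unavailable in the purely bounded setting, is what makes $\F_1$ the natural class for the statement.
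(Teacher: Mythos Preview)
Your approximation scheme for (1) and (2) and your rooftop sandwich for (3) are exactly the approach the survey indicates: the subgeodesic $w_{C,t}=P(u_0,u_1+C)-Ct$ is singled out just before the theorem as the replacement for $V_t$, and the key input $P[u_1](u_0)=u_0$ in $\F_1$ is the Lemma stated in the next section (whose Remark says precisely that it yields connectivity via Theorem~\ref{Darvas}). One cosmetic point: the upper bound $u_t\le(1-t)u_0+tu_1$ follows from convexity in $t$ together with the $\limsup$ condition already built into the definition of $W(u_0,u_1)$, so you should not phrase it as requiring the ``correct boundary limits'', which is the very thing you are proving.

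The only soft spot is your last paragraph. The quantity $\BE(u_t)-\BE(\max\{u_t,u_j-\epsilon\})$ addresses at best the set $\{u_t<u_j-\epsilon\}$, says nothing about $\{u_t>u_j+\epsilon\}$, and turning such an energy gap into a capacity bound is not as routine as you suggest. In fact no separate energy argument is needed: the sandwich itself already yields capacity convergence. From the upper bound one gets $\{u_t>u_0+\epsilon\}\subset\{u_0+u_1<-\epsilon/t\}$, and the sublevel estimate $\Capa\{w<-s\}\le s^{-n}\int(dd^cw)^n$ for $w=u_0+u_1\in\F$ sends this to zero. From the lower bound with $Ct<\epsilon/2$ one gets $\{u_t<u_0-\epsilon\}\subset\{P(u_0,u_1+C)<u_0-\epsilon/2\}$; since $P(u_0,u_1+C)\nearrow u_0$ q.e., this capacity tends to zero as $C\to\infty$ by the standard fact that monotone sequences of negative psh functions converge in capacity. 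This is how the paper itself packages (3), via the $\F_1$ rooftop lemma feeding into Theorem~\ref{Darvas}.
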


Uniqueness Theorems~\ref{U1} and \ref{U2} also remain true for $u,v\in\F_1(D)$ \cite{R16}.

\section{Rooftop envelopes}

Rooftop envelopes were explicitly introduced in \cite{RWN} for quasi-psh functions on compact K\"{a}hler manifolds, and then the technique was developed in \cite{Da14a}, \cite{DaR}, \cite{DNL18}, \cite{GLZ},  \cite{NT}, \cite{RWN17} and others.
In the local context, they were considered first in \cite{R16} for functions in the Cegrell class $\F_1$ and then in \cite{R22} for arbitrary psh functions, bounded from above.

The {\it rooftop envelope} $P(u,v)$ of bounded above functions $u$ and $v$ is the largest psh minorant of $\min\{u,v\}$. Since $P(u,v)\ge u+v -M$ for some $M\ge 0$,
$P(u,v)\not\equiv -\infty$.

As follows from \cite[Prop. 3.3]{Da14a} (see also \cite[Lemma 3.7]{DNL18a}),
\beq\label{MAP} {\rm NP}(dd^c[P(u,v)])^n\le \Bone_{\{P(u,v)=u\}} {\rm NP}(dd^c u)^n + \Bone_{\{P(u,v)=v\}} {\rm NP}(dd^c v)^n, \eeq
where ${\rm NP}(dd^c w)^n$ is the {\it non-pluripolar Monge-Amp\`ere operator} in the sense of \cite{BT87}: for Borel sets $E$,
$${\rm NP}(dd^c w)^n=\lim_{j\to\infty}\Bone_{E\cap\{w>-j\}} (dd^c\max\{w,-j\})^n.$$
In particular, $P(u,v)$ satisfies $(dd^c [P(u,v)])^n=0$ on $\{-\infty < P(u,v)< \min\{u,v\}\}$.

While $P(u,v_j)$ decreases to $P(u,v)$ when $v_j$ decreases to $v$, its behavior for increasing $v_j$ can be more complicated, provided $v_j$ are unbounded from below.

\begin{example}\label{exincr} {\rm Let $D=\D^n$, $u=0$, $v_j=\max_k\log|z_k|+j$. Then $\min\{u,v_j\}$ increase, as $j\to\infty$, to the function $\hat h$ equal to $0$ outside the origin and $\hat h(0)=-\infty$, while $P(u,v_j)=v_0$ for all $j$.}
\end{example}

This observation is a particular case of how the rooftop envelopes $P(u,v+C)$ behave when $C\to\infty$.
Denote
$${\sup_C}^*P(u,v+C)=P[v](u),$$ the {\it asymptotic rooftop}, or {\it asymptotic envelope}, of $u$ with respect to the singularity of $v$.
\begin{lemma} {\rm \cite{R16}} If $u,v\in\F_1(D)$, then $P[v](u)=u$.
\end{lemma}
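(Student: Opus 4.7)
The plan is to prove $P[v](u)=u$ by showing $\BE(P[v](u))=\BE(u)$ and invoking the uniqueness clause of Theorem~\ref{U1}, whose extension to $\F_1(D)$ is noted after Theorem~\ref{F1}. Write $\phi_C=P(u,v+C)$, so that $\phi_C$ is nondecreasing in $C$, $\phi_C\le u$, and its upper envelope is $\phi:=P[v](u)\le u$. Since $u,v\le 0$ and $C\ge 0$, the sum $u+v\in\F_1(D)$ satisfies $u+v\le\min\{u,v+C\}$, hence $u+v\le\phi_C\le 0$; this sandwich places $\phi_C,\phi$ in $\F_1(D)$, and Theorem~\ref{U1}(1) gives $\BE(\phi)\le\BE(u)$.

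For the reverse inequality, I bound $\BE(\phi_C)$ from below using the rooftop Monge-Amp\`ere estimate (\ref{MAP}). Set $A_C=\{\phi_C=u\}$ and $B_C=\{\phi_C=v+C\}$; since $\phi_C\le 0$, integrating against the nonnegative function $-\phi_C$ yields
$$\BE(\phi_C)=\int\phi_C\,(dd^c\phi_C)^n\ge\int_{A_C}u\,(dd^cu)^n+\int_{B_C}(v+C)\,(dd^cv)^n.$$
On $B_C$ one has $v+C\le u\le 0$, so $v\le -C$ and $|v+C|\le|v|$; combined with $v\in\F_1(D)$ (i.e.\ $\int|v|(dd^cv)^n<\infty$) and the fact that $(dd^cv)^n$ puts no mass on $\{v=-\infty\}$, dominated convergence gives $\int_{B_C}(v+C)(dd^cv)^n\to 0$ as $C\to\infty$. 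The sets $A_C$ are increasing in $C$, so $\int_{A_C}u\,(dd^cu)^n\to\int_{A_\infty}u\,(dd^cu)^n$ where $A_\infty\subseteq\{\phi=u\}$.

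The crux is then to verify $(dd^cu)^n(\{\phi<u\})=0$, so that $\int_{A_\infty}u\,(dd^cu)^n=\BE(u)$. This follows by observing that $(dd^c\phi_C)^n$ vanishes on $\{\phi_C<\min(u,v+C)\}$ by the rooftop equation, so that in the limit $\phi$ is maximal off its contact set with $u$; a domination/comparison argument in the class $\F$ then forces equality wherever $(dd^cu)^n$ puts mass. Combined with continuity of $\BE$ under monotone convergence in $\F_1$ (so that $\BE(\phi_C)\to\BE(\phi)$), we conclude $\BE(\phi)\ge\BE(u)$, hence $\BE(\phi)=\BE(u)$, and the uniqueness statement of Theorem~\ref{U1}(2) for $\F_1$ gives $\phi=u$.

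The main obstacle is precisely the identification $(dd^cu)^n(\{\phi<u\})=0$; this requires a comparison/domination principle tailored to $\F$. An alternative route to the same conclusion — which avoids an explicit comparison principle — is to work directly with the subgeodesic $w_{C,t}=\phi_C-Ct$ between $u_0=u$ and $u_1=v$, exploit convexity of $t\mapsto\BE(w_{C,t})$ (Theorem~\ref{F1}(1)) together with affinity for the geodesic $u_t$, and pass to the limit $C\to\infty$ to squeeze $\BE(\phi)=\BE(u)$.
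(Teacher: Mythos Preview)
Your overall strategy --- show $\BE(\phi)=\BE(u)$ and invoke the uniqueness part of Theorem~\ref{U1} for $\F_1$ --- is sound, and you carry it almost to the end. The gap you flag, however, is a phantom: you do \emph{not} need $(dd^cu)^n(\{\phi<u\})=0$, nor do you need continuity of $\BE$ along the increasing family $\phi_C$. Since $u\le 0$, the trivial inclusion $A_C\subset D$ already gives
\[
\int_{A_C} u\,(dd^cu)^n \;\ge\; \int_D u\,(dd^cu)^n \;=\; \BE(u),
\]
so your displayed inequality yields $\BE(\phi_C)\ge \BE(u)+\int_{B_C}(v+C)\,(dd^cv)^n$, and the $B_C$--term tends to $0$ exactly as you argue. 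Hence $\liminf_{C}\BE(\phi_C)\ge\BE(u)$. On the other hand $\phi_C\le\phi\le u$ gives, by monotonicity of $\BE$, $\BE(\phi_C)\le\BE(\phi)\le\BE(u)$ for every $C$. Squeezing, $\BE(\phi)=\BE(u)$, and the uniqueness clause finishes the proof. No comparison/domination principle is needed.

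Two minor points worth making precise. First, when you invoke (\ref{MAP}) to get your energy inequality, you are implicitly using that for $w\in\F_1(D)$ one has $(dd^cw)^n={\rm NP}(dd^cw)^n$; this holds because the Monge--Amp\`ere mass on $\{w>-\infty\}$ never charges pluripolar sets, while finiteness of $\int|w|(dd^cw)^n$ forces $(dd^cw)^n(\{w=-\infty\})=0$. Second, your ``alternative route'' via the affinity of $\BE$ along the geodesic $u_t$ is circular: Theorem~\ref{F1}(2)--(3) presuppose that $u_t$ actually connects $u_0$ and $u_1$, and the Lemma you are proving is precisely the ingredient used (via Theorem~\ref{Darvas}) to establish that connectivity.

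The paper itself does not reproduce a proof of this lemma (it is only cited to \cite{R16}), so there is no in-text argument to compare against; with the simplification above, your energy argument is complete and self-contained.
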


{\it Remark.} This actually implies the connectivity of any $u_0,u_1\in\F_1(D)$, see Theorem~\ref{Darvas}.

\medskip

One can ask if this works for any negative psh functions. The rest of the paper will be actually devoted to this question.

\section{Geodesics on $\PSH^-(D)$}

Any $u\in\PSH^-(D)$ is the limit of a decreasing sequence $u_N\in\E_0(D)$  \cite{Ce09}. So, for any pair $u_j\in\PSH^-(D)$, $j=0,1$, we repeat what we did for $\F_1$: approximate $u_j$ by $u_{j,N}$, connect them by the geodesics $u_{t,N}$, and then get the 'geodesic' $u_t$ as the limit of $u_{t,N}$ as $N\to\infty$.

The crucial question is if $u_t$ connects $u_j$. More precisely: {\sl Does $u_t$ converge to $u_j$, in any sense, as $t\to j\in\{0,1\}$?}

\begin{example}\label{ExGr} {\rm Let $D=\D$, $u_0=0$, $u_1=\log|z|\in\F(\D)\setminus\F_1(\D)$.

For any $N>0$, the function $u_{N,t}=\max\{u_1, -Nt\}$ is the geodesic between $u_0$ and $u_{1,N}=\max\{u_1, -N\}$. Therefore, $u_t\equiv u_1=\log|z|$ is not passing through $u_0$.

 The same works for $u_0=0$ and $u_1=G_a\in\F(D)\setminus\F_1(D)$, the pluricomplex Green function with pole at $a\in D\subset\Cn$, $n>1$.}
 \end{example}

 Even more striking is

 \begin{example} {\rm Let $u_j=G_{a_j}$ be the (pluricomplex) Green functions with different poles. Then $u_t$ does not exceed the geodesics between $G_{a_0}$ and $0$, that is, by Example~\ref{ExGr}, $G_{a_0}$, and, by the same argument, it does not exceed $G_{a_1}$. Therefore, it does not exceed (actually, equals)
$P(G_{a_0},G_{a_0})=G_{a_0, a_1}$, the Green functions with two logarithmic poles.

In this case, we get a 'geodesic' that does not pass through any of the endpoints.}
\end{example}

So, there are functions that cannot be connected by geodesics, the obstacle being that they have different `strong' singularities. This sets the following

\medskip
{\it Geodesic connectivity problem}:
{\sl What pairs $u_0,u_1\in\PSH^-(D)$  can be connected by a psh (sub)geodesic?}

\medskip

The problem in the compact setting (quasi-psh functions on compact K\"{a}hler manifolds) was handled in \cite{Da14a} in terms of {\it asymptotic envelopes}, and this easily adapts to the local case:

\begin{theorem}\label{Darvas} {\rm\cite{R22}} Let $u_0,u_1\in\PSH^-(D)$, then
the geodesic $u_{t}$ converges to $u_0$ in $L_{loc}^1(D)$ (and in capacity) as $t\to 0$  if and only if $P[u_1](u_0)=u_0$.
\end{theorem}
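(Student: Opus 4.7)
The plan is to reduce the equivalence to two complementary one-sided bounds on $u_t$ as $t\to 0^+$: a \emph{lower} bound supplied by the rooftop subgeodesic, and an \emph{upper} bound supplied by convexity inherited from the bounded approximants. Combined with the automatic inequality $P[u_1](u_0)\le u_0$, these yield the equivalence by direct comparison.

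\textbf{Lower bound.} For each $C\ge 0$, the curve $w_{C,t}:=P(u_0,u_1+C)-Ct$ is psh in $(z,\zeta)$ (since $-C\log|\zeta|$ is pluriharmonic in $\zeta$), and at the boundary components $\cA_0,\cA_1$ of $\cA$ it satisfies $P(u_0,u_1+C)\le u_0$ and $P(u_0,u_1+C)-C\le u_1$ respectively; hence $w_{C,t}\in W(u_0,u_1)$, so $w_{C,t}\le u_t$ by the defining upper-envelope property of the geodesic. Letting $t\to 0^+$ with $C$ fixed gives $P(u_0,u_1+C)\le \liminf_{t\to 0^+}u_t$ pointwise; taking the supremum over $C\ge 0$ followed by usc regularization yields $P[u_1](u_0)\le (\liminf_{t\to 0^+}u_t)^*$.

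\textbf{Upper (convexity) bound.} Approximate $u_j$ by decreasing sequences $u_{j,N}\in\E_0(D)$ with $u_{j,N}\downarrow u_j$. By monotonicity of the geodesic in its endpoint data, $u_{t,N}\downarrow u_t$; the bounded-case estimate (\ref{VtM}) applied to $u_{t,N}$ gives $u_{t,N}\le (1-t)u_{0,N}+tu_{1,N}$, which in the limit $N\to\infty$ becomes $u_t\le (1-t)u_0+tu_1$. Therefore $\limsup_{t\to 0^+}u_t(z)\le u_0(z)$ at every $z$ with $u_1(z)>-\infty$, while at points with $u_1(z)=-\infty$ the same bound forces $u_t(z)=-\infty$ for all $t>0$, so the inequality is trivial.

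\textbf{Conclusion.} For the backward implication, suppose $P[u_1](u_0)=u_0$. The two bounds give $u_t(z)\to u_0(z)$ pointwise; since the $u_t$ are uniformly dominated above by $0$ and the fibrewise convexity of $u_t$ in $t$ yields one-sided monotonicity near $t=0$, dominated convergence together with the standard capacity-convergence result for monotone psh sequences with psh limit promote this to convergence in $L^1_{loc}$ and in capacity. For the forward implication, $L^1_{loc}$-convergence of the psh family $u_t$ to the psh limit $u_0$ gives $(\lim_{t\to 0^+}u_t)^*=u_0$; the lower bound then yields $u_0\ge P[u_1](u_0)$ off a polar set, an inequality which extends to $D$ by upper semicontinuity. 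Combined with $P[u_1](u_0)\le u_0$ this forces equality.

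\textbf{Main obstacle.} The conceptual crux is the lower bound: the rooftop subgeodesic $w_{C,t}$ is powerful precisely because it remains a bona fide element of $W(u_0,u_1)$ even for unbounded $u_j$, which is what allows the argument to reach beyond $\F_1(D)$. The ancillary technical point is the capacity-convergence clause in the sufficiency direction, which relies on the fibrewise monotonicity furnished by convexity of $u_t$ in $t$.
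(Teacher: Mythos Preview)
Your backward implication is essentially sound: the rooftop subgeodesic $w_{C,t}$ gives the correct lower bound, the convexity bound $u_t\le(1-t)u_0+tu_1$ gives the upper bound, and together with domination $P(u_0,u_1)\le u_t\le 0$ you get $L^1_{loc}$-convergence. (Two small repairs: pointwise convergence fails on the pluripolar set $\{u_1=-\infty\}\cap\{u_0>-\infty\}$, which is harmless for $L^1_{loc}$; and ``monotonicity from convexity'' is not a valid step --- convexity in $t$ does not force $u_t$ to be monotone near $0$. For capacity convergence you should instead argue directly: $\{u_t<u_0-\epsilon\}\subset\{P(u_0,u_1+C)<u_0-\epsilon/2\}$ for $t<\epsilon/(2C)$, and the latter has small capacity for large $C$ since $P(u_0,u_1+C)\nearrow u_0$ q.e.; the set $\{u_t>u_0+\epsilon\}$ is handled via $\{u_0<-\epsilon/t\}$.)

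The forward implication, however, is circular. You write that from $(\lim_{t\to0^+}u_t)^*=u_0$ ``the lower bound then yields $u_0\ge P[u_1](u_0)$'', and then combine this with $P[u_1](u_0)\le u_0$ to conclude equality. But these are the \emph{same} inequality: your lower bound says $P[u_1](u_0)\le(\liminf u_t)^*$, and identifying the right side with $u_0$ just reproduces the trivial estimate $P[u_1](u_0)\le u_0$. What is actually needed is the reverse inequality $u_0\le P[u_1](u_0)$, and for that one must show that $v_0:=\lim_{t\to0^+}u_t$ satisfies $v_0^*\le P[u_1](u_0)$ \emph{unconditionally}. This is the substantive step in Darvas's argument (adapted in \cite{R22}, which the present survey cites without reproducing the proof): one shows that $v_0^*$ is plurisubharmonic, is $\le u_0$, and carries the singularity type of $u_1$, so that it competes in the envelope defining $P[u_1](u_0)$. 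Establishing the plurisubharmonicity of $v_0^*$ and the singularity comparison requires genuine work --- typically via the psh extension of $\widehat u$ across $\{|\zeta|=1\}$ or an equivalent device --- and is precisely the ``upper bound on $v_0$'' that your outline lacks. Without it the forward direction is unproved.
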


As we have already seen, the possible obstacles can arise only from the singularities of $u_0$ and $u_1$. When the singularities are equivalent, $u_0\asymp u_1$, which means
$$u_0(z)-A\le u_1(z)\le u_0(z)+A$$ for some $A>0$ and all $z\in D$, we have both $P[u_1](u_0)=u_0$ and $P[u_0](u_1)=u_1$ and so, the geodesic connects the data functions.

This however does not cover Theorem~\ref{F1} because functions from $\F_1$ need not have equivalent singularities.
Then one should look for a coarser equivalency of singularities.

In \cite{H}, it was proved that two toric psh functions in $\D^n$ with isolated singularities at $0$ can be connected if and only if all their {\it directional Lelong numbers} coincide. This means that the main terms of their singularities are the same. The proof was based on relating toric psh functions to convex ones (as in Section~4) and then using technique of convex analysis. This will not work for arbitrary psh functions, so one should find another way to single out such `main terms' of the singularities.

\section{Residual function}

Most of the contents of this section is taken from \cite{R22}.

\begin{definition} Given $\phi\in \PSH^-(D)$, its {\it residual function} is
$$ g_{\phi}=g_{\phi,D}=P[\phi](0)={\sup_{C\ge 0}}^*P(\phi+C,0)\}\in \PSH^-(D).$$
Equivalently, $g_\phi$ is the (u.s.c. regularization) of the upper envelope of the class of all functions $u\in\PSH^-(D)$ with singularities at least as strong as that of $\phi$, meaning that $u\le \phi + C$
for some $C\in \R$. 
\end{definition}

The function $g_\phi$ is determined by the asymptotic behavior of $\phi$ near its singularities, and it is a candidate for the main term of the asymptotic of the singularity of $\phi$.

\begin{example} {\rm
If $\phi(z)\asymp\log|z-a|$, $a\in D$, then $g_\phi=G_a$, the {\it pluricomplex Green function with pole at $a$}.
}
\end{example}
\begin{example} {\rm
More generally, if $\phi(z)\asymp \sum c_k\log|z-a_k|$, then $g_\phi$ is the {\it weighted multipole pluricomplex Green function}.}
\end{example}
\begin{example} {\rm If $\phi$ is toric and $D=\D^n$, then $g_\phi$ coincides with its {\it indicator} $\Psi_u$ \cite{R13} defined in \cite{LeR}, \cite{R00} as the toric psh function in $\D^n$ whose convex image $\psi_u(s):=\Psi_u(\exp s)$ in $\Rn_-$ is given by the directional Lelong numbers $\nu_{u,a}$ of $u$ at $0$ in directions $a\in\Rn_+$:
$$\psi_u(s)=-\nu_{u,-s}, \quad s\in\Rn_-.$$
}
\end{example}

The next two examples deal with functions with non-isolated singularities.

\begin{example}\label{grDn} {\rm  Let $\phi(z)\asymp \log|z_1|$ and $D=\D^n$, then
$g_{\phi}(z)=\log|z_1|$.}
\end{example}

\begin{example} {\rm  Let $\phi(z)\asymp \log|z_1|$ and $D=\Bn$, then \cite{LS}
$$g_{\phi}(z)=\log\frac{|z_1|}{\sqrt{1-|z'|^2}}.$$}
\end{example}

These two are particular cases of {\it Green functions with poles along complex spaces}: given an ideal $\I=\langle f_1,\ldots,f_N\rangle\subset\cO(D)$ with bounded generators, $G_\I$ is the upper envelope of $u\in\PSH^-(D)$ such that $u\le \log|f|+O(1)$ \cite{RSig2}. Note that in this definition, the asymptotics of $u$ and $\log|f|$ are related only locally, not uniformly in $D$, so their equality to the corresponding residual functions is not trivial facts.

\medskip
A bit different are the next two examples dealing with boundary singularities.

\begin{example} {\rm
If $D=\D\subset\C$ and $\phi=-{\mathcal P}_a$, the negative Poisson kernel with pole at $a\in\partial\D$,  then $g_\phi=-{\mathcal P}_a$.}
\end{example}

\begin{example} {\rm  More generally, if $D\subset\Cn$ is strongly pseudoconvex with smooth boundary, then for any $\zeta\in\partial D$ there exists the {\it pluricomplex Poisson kernel} $\Omega_\zeta\in\PSH^-(D)$ which satisfies $(dd^c\Omega_\zeta)^n=0$ in $D$, is continuous in ${\overline D}\setminus\{\zeta\}$, equal to $0$ on ${\partial D}\setminus\{\zeta\}$, and such that $\Omega_\zeta(z) \approx - |z-\zeta|^{-1}$ as $z\to \zeta$ nontangentially \cite{BPT}, \cite{BST}.

We have $P(\Omega_\zeta+C,0)=\Omega_\zeta$ for any $C>0$ and so,  $g_{\Omega_\zeta}=\Omega_\zeta$. When $D=\mathbb{B}^n$,
$$\Omega_{\zeta}(z)=\frac{|z|^2-1}{|1-\langle z,\zeta\rangle|^2}.$$}
\end{example}

In the general case, the picture can be much more complicated.
Since the singularities can lie both inside the domain and on its boundary, we call $g_\phi$ {\it the Green-Poisson residual function} of $\phi$ for the domain $D$.

\medskip
By properties of rooftops, $(dd^c P(\phi+C,0))^n=0$ on $\{\phi>-C\}$, so the non-pluripolar Monge-Amp\`ere current of $g_\phi$ is zero.

The boundary values of $g_\phi$ need not be zero (outside the unbounded locus of $\phi$), see Example~\ref{grDn}. However they are zero there if the domain is B-regular (i.e., each boundary point possesses a strong psh barrier \cite{Si}).

By the {\it unbounded locus of} $u\in\PSH^-(D)$ we mean the set $L_u$ of all points $z\in\overline D$ such that $u$ is not bounded in $D\cap U_z$ for any neighborhood $U_z$ of $z$.

A very important property we believe the residual functions have is their {\it idempotency}:
$$g_{g_{u}}=g_u.$$
At the moment, however, it is known to hold only under some assumptions on $u$. To present them, we need the following

\begin{definition}
 $u\in\PSH^-(D)$ has small unbounded locus if there exists $v\in\PSH^-(D)$, $v\not\equiv -\infty$, such that
$v^*=-\infty$ on $L_u$; here, for any $\zeta\in{\overline D}$, $v^*(\zeta)=\limsup_{z\to\zeta} v(z)$.
\end{definition}

This differs from the definition of small unbounded locus used in \cite{R22}, where the requirement was {\it pluripolarity} of $L_u$, that is, existence of a function $V\not\equiv-\infty$ which is psh in a neighborhood of ${\overline D}$ and $V(\zeta)=-\infty$ for all $\zeta\in L_u$. The present definition does not change $L_u\cap D$, while it allows the boundary part $L_u\cap\partial D$ to be much bigger than pluripolar; such sets are called {\it $b$-pluripolar} \cite{DW}. For example, a compact set $K\subset \partial \D\subset\C$ is $b$-pluripolar if and only if it is of zero Lebesgue measure.

The following was proved in \cite{R22}, and it can be checked that the proof of assertions (i) and (ii) with the new definition of small unbounded locus remains unchanged.

\begin{theorem}
Let $u\in\PSH^-(D)$. Then $g_{g_{u}}=g_u$, provided one of the conditions is fulfilled:
\begin{enumerate}
\item[(i)]  $u$ has {\it small unbounded locus};
\item[(ii)]  the boundary function $\tilde u$ of $u\in \E(D)$, in the sense of Cegrell \cite{Ce08}, has small unbounded locus;
\item[(iii)] $u\in \F(D)$;
\item[(iv)]  $n=1$ (i.e., $D\subset\C$).
\end{enumerate}
\end{theorem}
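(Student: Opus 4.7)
The inequality $g_u \le g_{g_u}$ is immediate without any hypothesis: since $g_u \in \PSH^-(D)$ and $g_u \le g_u + C$ for every $C \ge 0$, $g_u$ itself is a competitor in the envelope defining $g_{g_u}$. The content of the theorem is the reverse inequality $g_{g_u} \le g_u$, and by monotonicity of $C \mapsto P(g_u + C, 0)$ in $C$ it suffices to show $P(g_u + C, 0) \le g_u$ for every fixed $C \ge 0$. Equivalently, any $w \in \PSH^-(D)$ with $w \le g_u + C$ must satisfy $w \le g_u$ off a pluripolar set; the natural route is to exhibit $w$ as a competitor in the envelope defining $g_u$, i.e., to show $w \preceq u$, after which $w \le g_u$ is automatic.

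For (iii) and (iv) the statement is already contained in \cite{R22}, and my task is to invoke the existing proofs: for $u \in \F(D)$, Cegrell's machinery together with the vanishing of the non-pluripolar Monge--Amp\`ere mass of $g_u$ via (\ref{MAP}) drive the comparison; for $n=1$, the Riesz decomposition of subharmonic functions identifies $g_u$ with the Green potential of the interior Riesz mass of $u$, and the equality $g_{g_u} = g_u$ follows by direct inspection. For (i) and (ii) my plan is to retrace the argument of \cite{R22} and verify that relaxing pluripolarity of $L_u$ to the mere existence of $\psi \in \PSH^-(D)$ with $\psi^{*} \equiv -\infty$ on $L_u$ leaves everything intact. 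The comparison between $w$ and the increasing sequence $P(u + N, 0)$ whose upper regularization is $g_u$ is carried out on $\{\psi > -\infty\} \cap D$, where $u$ is locally bounded; the barrier $\psi$ enters only as a negligibility witness in a domination principle applied inside $D$. Consequently the interior part $L_u \cap D$ is still pluripolar (the restriction of $\psi$ to compact interior subsets witnesses this), while the boundary part $L_u \cap \partial D$ is only constrained to be $b$-pluripolar, which plays no role in the interior estimates used in \cite{R22}.

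The main obstacle I anticipate is case (ii), where one works with the Cegrell boundary function $\tilde u$ of $u \in \E(D)$ rather than with $u$ itself. Here the argument must transfer the barrier associated with $\tilde u$ through a Cegrell-type decomposition, confirm that the $b$-pluripolar boundary structure is compatible with the comparison estimates on the approximants, and then pass to the limit. Once this bookkeeping is in place, no genuinely new input is needed beyond what is already in \cite{R22}, and the conclusion $P(g_u + C, 0) \le g_u$ follows, completing the proof.
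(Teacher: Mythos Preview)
Your approach matches the paper's exactly: the paper gives no self-contained argument but simply cites \cite{R22} for all four cases and remarks that the proofs of (i) and (ii) go through verbatim under the broadened ($b$-pluripolar) definition of small unbounded locus---precisely the verification you outline.

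One correction to your sketch of case (iv), however: $g_u$ is \emph{not} the Green potential of the full interior Riesz mass of $u$. Per the Remark immediately following the theorem (and \cite{Pa}, \cite{AL}), if $u = {\mathcal G}_{\mu} + {\mathcal P}_{\nu}$ is the Green--Poisson representation, then $g_u = {\mathcal G}_{\mu_s} + {\mathcal P}_{\nu_s}$, where $\mu_s$ is the restriction of $\mu$ to $\{u = -\infty\}$ and $\nu_s$ is the singular part of the boundary measure $\nu$. Your description drops the Poisson contribution and uses all of $\mu$ rather than only $\mu_s$; for instance, with $u = -{\mathcal P}_a$ your formula would yield $g_u = 0$, whereas in fact $g_u = -{\mathcal P}_a$ (Example~9.7). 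This does not damage the overall strategy---once the correct formula for $g_u$ is in hand, the idempotency $g_{g_u} = g_u$ really is a direct inspection---but the formula itself needs to be stated correctly.
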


{\it Remark.} In the one-dimensional case, the structure of residual functions is quite simple: if $u={\mathcal G}_{\mu} + {\mathcal P}_{\nu}$ is the Green-Poisson representation of $u\in\SH^-(\D)$, then $g_u = {\mathcal G}_{\mu_s} + {\mathcal P}_{\nu_s}$ with $\mu_s$ the restriction of the Riesz measure $\mu$ of $u$ to $\{u=-\infty\}$ and $\nu_s$ the singular part (with respect to the Lebesgue measure) of the boundary measure $\nu$ of $u$ \cite{Pa}, \cite{AL}.

\medskip
The idempotency of the residual functions has a lot of useful applications; for those concerning the geodesics, see Section 11.

\medskip
The residual function $g_\phi$ keeps main characteristics of singularities of $\phi$. Recall that the {\it Lelong number} $\nu_u(a)$ of a psh function $u$ at a point $a$ is the largest nonnegative number $\nu$ such that $u(z)\le \nu\log|z-a|+ O(1)$ near $a$, the {\it log canonical threshold} $c_u(a)$ of $u$ is the supremum of $c\ge 0$ such that $e^{-cu}\in L_{loc}^2(a)$, and the multiplier ideal $\I_u(a)$ is formed by all $f\in\cO_a$ such that $|f|e^{-u}\in L_{loc}^2(a)$. Since all they are continuous for increasing sequences of psh functions, we get

\begin{theorem} For any $\phi\in\PSH^-(D)$ and $a\in D$, $\nu_{g_\phi}(a)=\nu_{\phi}(a)$, $c_{g_\phi}(a)=c_{\phi}(a)$, and  $\I_{tg_\phi}(a)=\I_{t\phi}(a)$ $\forall t>0$.
\end{theorem}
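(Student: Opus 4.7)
The plan is to sandwich $g_\phi$ between $\phi$ and $\phi+C$ by means of the approximating family $u_C := P(\phi+C,0)$, and then transfer the three local invariants through the limit $C \to \infty$. For each $C \geq 0$ I set $u_C := P(\phi + C, 0) \in \PSH^-(D)$. By construction $u_C \leq \phi + C$; on the other hand $\phi \leq 0$ together with $C \geq 0$ gives $\phi \leq \min\{\phi+C,0\}$, so $\phi$ is itself an admissible candidate and $u_C \geq \phi$. Thus $0 \leq u_C - \phi \leq C$ on $D$. The family $\{u_C\}$ is increasing in $C$, and by the very definition of the residual function $g_\phi$ is the u.s.c.\ regularization of $\sup_{C \geq 0} u_C$.

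Next, I will observe that all three invariants are insensitive to globally bounded additive perturbations: adding a bounded function to $\phi$ does not alter its leading logarithmic asymptotics (preserving the Lelong number), and it multiplies $e^{-t u_C}$ (respectively $|f|^2 e^{-2tu_C}$) by a function pinched between the positive constants $e^{-tC}$ and $1$, so that the relevant local $L^2$-integrability is preserved. Hence
\[
\nu_{u_C}(a) = \nu_\phi(a), \quad c_{u_C}(a) = c_\phi(a), \quad \I_{tu_C}(a) = \I_{t\phi}(a)
\]
for every $a \in D$, $t > 0$, and $C \geq 0$.

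Finally, I will pass to the limit $C \to \infty$. All three invariants are continuous along increasing sequences of psh functions: for $\nu$ this is a classical result of Demailly, while for $c$ and $\I_{t\cdot}$ it is the content of Berndtsson's openness theorem and the Guan--Zhou strong openness theorem, respectively. Applied to the $C$-independent values displayed above, this yields the same equalities with $g_\phi$ in place of $u_C$, which is the statement of the theorem. The main obstacle is precisely this final step: it rests on the non-trivial continuity of $c$ and $\I_t$ under increasing limits, i.e.\ on the (strong) openness theorem. Once that continuity is invoked, the rest of the argument is formal, because the whole family $\{u_C\}$ consists of globally bounded perturbations of $\phi$ and therefore shares its local invariants at every $a \in D$.
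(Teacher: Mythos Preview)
Your argument is correct and coincides with the paper's approach: the paper simply remarks that the three invariants are continuous along increasing sequences of psh functions and concludes immediately, which is exactly your sandwich $\phi \le P(\phi+C,0) \le \phi+C$ followed by the passage $C\to\infty$. You have merely made explicit the references (Demailly for Lelong numbers, Berndtsson and Guan--Zhou for the log canonical threshold and multiplier ideals) that the paper leaves implicit.
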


When $\phi\in\F(D)$, the residual function can actually be described explicitly.

\begin{theorem}\label{resid}
If $(dd^c \phi)^n$ is well-defined, then so is $(dd^c g_\phi)^n$ and, furthermore,
$$ (dd^c g_\phi)^n=\Bone_{\{\phi=-\infty\}}(dd^c\phi)^n.$$
In particular, if $\phi\in\F(D)$, then $g_\phi\in\F(D)$ and it is a unique solution to the equation $(dd^cu)^n=\Bone_{\{\phi=-\infty\}}(dd^c\phi)^n$ in the class $\F(D)$.
\end{theorem}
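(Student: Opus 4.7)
My approach is to realize $g_\phi$ as the (regularized) increasing limit of the rooftops $\psi_C:=P(\phi+C,0)$ as $C\to\infty$, and then propagate the Monge-Amp\`ere information from each $\psi_C$ through to $g_\phi$. The core tool is the rooftop inequality (\ref{MAP}): since $\psi_C\le 0$, the contact set with the zero function contributes nothing to (\ref{MAP}), so
$$\mathrm{NP}(dd^c\psi_C)^n\le \Bone_{\{\psi_C=\phi+C\}}(dd^c\phi)^n\le \Bone_{\{\phi\le -C\}}(dd^c\phi)^n,$$
and the sets $\{\phi\le -C\}$ shrink to $\{\phi=-\infty\}$ as $C\to\infty$.

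Integrating this inequality and using $\int_D(dd^c\phi)^n<\infty$ bounds the total Monge-Amp\`ere masses of $\psi_C$ uniformly in $C$. Together with the sandwich $\phi\le\psi_C\le 0$, standard Cegrell machinery places each $\psi_C\in\F(D)$, and the uniform mass bound forces the increasing limit $g_\phi$ to lie in $\F(D)$ as well. By continuity of the Monge-Amp\`ere operator along increasing $\F$-sequences of uniformly bounded mass, $(dd^c\psi_C)^n$ converges weakly to $(dd^c g_\phi)^n$; since $\Bone_{\{\phi\le -C\}}(dd^c\phi)^n$ converges weakly to $\Bone_{\{\phi=-\infty\}}(dd^c\phi)^n$ by monotone convergence of measures, passing to the limit yields the upper estimate $(dd^c g_\phi)^n\le\Bone_{\{\phi=-\infty\}}(dd^c\phi)^n$, consistent with the already-noted vanishing of the non-pluripolar part of $(dd^c g_\phi)^n$.

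For the matching lower bound I would use conservation of total mass: a sharper form of the rooftop identity available in $\F$ upgrades the above inequality to an equality of total masses, $\int_D(dd^c\psi_C)^n=\int_{\{\phi<-C\}}(dd^c\phi)^n$, up to a negligible interface contribution. Letting $C\to\infty$ yields $\int_D(dd^c g_\phi)^n=\int_{\{\phi=-\infty\}}(dd^c\phi)^n$, and a positive measure dominated by another of equal total mass must coincide with it; this forces $(dd^c g_\phi)^n=\Bone_{\{\phi=-\infty\}}(dd^c\phi)^n$. Uniqueness of $g_\phi$ as the $\F(D)$-solution of the Monge-Amp\`ere equation with this pluripolar right-hand side then follows from Cegrell's uniqueness theorem in the class $\F$. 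The main obstacle I foresee is exactly the mass-conservation step: the rooftop formula (\ref{MAP}) is stated as an inequality, and promoting it to an equality of total masses requires careful bookkeeping on the contact set $\{\psi_C=\phi+C\}$ and on the interface $\{\phi=-C\}$, as well as ensuring that the pluripolar-supported limits commute with $C\to\infty$.
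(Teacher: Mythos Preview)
The paper does not supply a proof of this theorem; Section~9 is explicitly attributed to \cite{R22}, and the only proof-related comment is the remark that uniqueness follows from \cite{ACCP}. So there is no ``paper's own proof'' to compare against, and I can only assess your argument on its merits.

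Your outline has the right architecture but a real gap at the decisive point. The rooftop estimate (\ref{MAP}) controls only the \emph{non-pluripolar} Monge--Amp\`ere measure of $\psi_C$, whereas the target measure $\Bone_{\{\phi=-\infty\}}(dd^c\phi)^n$ lives entirely on a pluripolar set. Thus (\ref{MAP}) tells you ${\rm NP}(dd^c g_\phi)^n=0$ (which the paper already noted just before the theorem), but it gives no information on the pluripolar part of $(dd^c\psi_C)^n$ or of $(dd^c g_\phi)^n$. In particular, your sentence ``Integrating this inequality \dots\ bounds the total Monge--Amp\`ere masses of $\psi_C$'' is not justified: you have bounded only the non-pluripolar mass. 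The total-mass bound is in fact true, but for a different reason: $\psi_C\ge\phi$ in $\F(D)$ forces $\int_D(dd^c\psi_C)^n\le\int_D(dd^c\phi)^n$ by the comparison principle in $\F$. For the same reason, the claimed upper estimate $(dd^c g_\phi)^n\le\Bone_{\{\phi=-\infty\}}(dd^c\phi)^n$ does not follow from the ${\rm NP}$ inequality you wrote down.

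What is actually needed is a statement about the \emph{full} measure $(dd^c\psi_C)^n$ on $\{\phi>-C\}$, namely that $\psi_C$ is maximal there; this is strictly stronger than (\ref{MAP}). Granting that, one concludes $(dd^c g_\phi)^n$ is supported on $\{\phi=-\infty\}$, but identifying two pluripolar-supported Monge--Amp\`ere measures is precisely the delicate step. Your proposed ``mass conservation'' device is left as a hoped-for upgrade of (\ref{MAP}) to an equality of total masses; you correctly flag this as the main obstacle, and indeed it is where your sketch stops being a proof.
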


{\it Remark.} The uniqueness part here follows from \cite{ACCP}.

\medskip

Functions from $\F(D)$ can have very large unbounded locus, it can even coincide with $\overline D$. Nevertheless, the boundary value of any $u\in \F(D)$, in the sense of Cegrell, is zero, which means that the least psh majorant $H$ of $u$ in $D$ satisfying $(dd^c H)^n=0$, is $H\equiv 0$.

More results on boundary behaviour for functions from other Cegrell classes and their residual functions can be found in \cite{R22} (see also last section of this paper).

\section{Residual functions and asymptotic rooftops}

In the compact setting, the corresponding analog of the residual function is an ultimate tool for checking the connectivity of quasi-psh functions \cite{DNL18a}. This was proved by using machinery of non-pluripolar Monge-Amp\`ere operator, including the monotonicity property \cite{BEGZ}, \cite{WN}. Unfortunately, that technique does not work in the local setting. In addition, functions from $\PSH^-(D)$ can have their singularities on the boundary, and theory of boundary behavior of such psh functions is still underdeveloped. That is why the results in the local theory are at the moment not that complete.

\medskip

Let $\phi,\psi \in\PSH^-(D)$. Then $P[\phi](\psi)\le g_\phi$, so
$$ P[\phi](\psi)\le P(g_\phi,\psi).$$
If $\phi\asymp g_\phi$ (i.e., $\phi\le g_\phi\le \phi + C$), then the {\it Rooftop Equality} holds:
\beq\label{RE} P[\phi](\psi)= P(g_\phi,\psi) \quad \forall\psi\in\PSH^-(D).\eeq
Furthermore, it also holds for all $\phi,\psi\in\F_1(D)$ \cite{R16}. The following guess is then natural:

\medskip
{\it Rooftop Equality conjecture:} (\ref{RE}) {\sl holds for all $\phi,\psi \in\PSH^-(D)$.}

\begin{theorem}\label{RE1} {\rm\cite{R22}}
Let $\phi\ge g_\phi+w$ with some $w\in\PSH^-(D)$ such that $g_w=0$, then the Rooftop Equality $P[\phi](\psi)= P(g_\phi,\psi)$ holds with any $\psi\in\PSH^-(D)$.
\end{theorem}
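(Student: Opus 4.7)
My plan is to prove the nontrivial inequality $P(g_\phi,\psi)\le P[\phi](\psi)$ by manufacturing, inside the family $\{P(\phi+C,\psi)\}_{C\ge 0}$ whose regularized supremum is $P[\phi](\psi)$, a subfamily whose regularized supremum is already $P(g_\phi,\psi)$. The condition $\phi\ge g_\phi+w$ together with $g_w=0$ is precisely what will make this possible.

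First I would dispose of the opposite direction $P[\phi](\psi)\le P(g_\phi,\psi)$, which holds unconditionally: since $\psi\le 0$, one has $\min\{\phi+C,\psi\}\le\min\{\phi+C,0\}$ and hence $P(\phi+C,\psi)\le P(\phi+C,0)$; passing to the regularized supremum yields $P[\phi](\psi)\le g_\phi$, which together with the obvious $P[\phi](\psi)\le\psi$ forces $P[\phi](\psi)\le P(g_\phi,\psi)$.

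Now set $u:=P(g_\phi,\psi)\in\PSH^-(D)$, so $u\le g_\phi$ and $u\le\psi$. The technical core is the elementary additive estimate
$$P(u+w+C,\,u)\;\ge\; u+P(w+C,\,0),\qquad C\ge 0,$$
whose verification is immediate: the right-hand side is plurisubharmonic and dominated both by $u$ (since $P(w+C,0)\le 0$) and by $u+w+C$ (since $P(w+C,0)\le w+C$), hence is a psh minorant of $\min\{u+w+C,u\}$. Since $g_w=0$, the non-regularized envelope $\sup_{C\ge 0}P(w+C,0)$ vanishes outside a pluripolar set, so $\sup_{C\ge 0}P(u+w+C,u)\ge u$ quasi-everywhere; combined with the pointwise bound $P(u+w+C,u)\le u$ and the fact that two plurisubharmonic functions coinciding outside a pluripolar set coincide everywhere, this yields
$$\sup_{C\ge 0}{}^{*}\,P(u+w+C,u)\;=\;u.$$

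Finally I would inject this identity into the family defining $P[\phi](\psi)$. The inequality $u\le g_\phi$ together with the hypothesis $g_\phi+w\le\phi$ gives $u+w+C\le\phi+C$, whence $P(u+w+C,\psi)\le P(\phi+C,\psi)$; and $u\le\psi$ yields $\min\{u+w+C,\psi\}\ge\min\{u+w+C,u\}$, whence $P(u+w+C,\psi)\ge P(u+w+C,u)$. Passing to $\sup_{C\ge 0}^{*}$ in the resulting chain delivers $u\le P[\phi](\psi)$, the desired direction. The only delicate point is the passage from the quasi-everywhere equality $\sup_{C}P(u+w+C,u)=u$ to the everywhere identity for its regularization, and this is precisely where the hypothesis $g_w=0$ is genuinely consumed; the rest of the argument is purely formal manipulation of rooftop envelopes.
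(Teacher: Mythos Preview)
Your argument is correct. The easy direction is standard, and for the hard direction your additive estimate $P(u+w+C,u)\ge u+P(w+C,0)$ together with $g_w=0$ cleanly produces $\sup_C^{*}P(u+w+C,u)=u$; the sandwich $P(u+w+C,u)\le P(u+w+C,\psi)\le P(\phi+C,\psi)$ then finishes. The passage from quasi-everywhere to everywhere equality is justified exactly as you say: the regularized supremum is psh, bounded above by the usc function $u$, and agrees with $u$ off a pluripolar set.

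Note, however, that this survey paper does not supply its own proof of the theorem: it merely quotes the result from \cite{R22}. So there is no in-paper argument to compare yours against. Your proof is self-contained and in the natural spirit of rooftop manipulations; it is very likely close to the original proof in \cite{R22}, since the key idea---translating the hypothesis $\phi\ge g_\phi+w$ into $u+w+C\le\phi+C$ and using $g_w=0$ to kill the $w$-contribution in the limit---is essentially forced by the shape of the statement.
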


Applying this to $w=\phi$, we get

\begin{corollary}\label{RE0} The Rooftop Equality holds with any $\psi\in\PSH^-(D)$ if $\phi$ does not have strong singularities, i.e., if $g_\phi=0$.
\end{corollary}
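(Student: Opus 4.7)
The plan is to obtain Corollary \ref{RE0} as a direct application of Theorem \ref{RE1} with the particular choice $w=\phi$. So the main work is simply verifying that the hypotheses of Theorem \ref{RE1} are met in this degenerate situation.

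First I would check the two conditions of Theorem \ref{RE1} with $w=\phi$. The first requirement is $\phi\ge g_\phi+w$, which under our assumption $g_\phi=0$ reads $\phi\ge 0+\phi$, i.e., an equality, hence trivially satisfied. The second requirement is $g_w=0$; but $w=\phi$ and the hypothesis of the corollary is precisely $g_\phi=0$. Thus Theorem \ref{RE1} applies and delivers
\[
P[\phi](\psi)=P(g_\phi,\psi)\qquad\forall\psi\in\PSH^-(D),
\]
which is exactly the Rooftop Equality \eqref{RE} for this $\phi$.

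As a sanity check, I would observe that since $g_\phi=0$ and $\psi\in\PSH^-(D)$, one has $P(g_\phi,\psi)=P(0,\psi)=\psi$, so the conclusion can be restated as $P[\phi](\psi)=\psi$ for every negative psh $\psi$. This is consistent with the informal slogan that $P[\phi]$ records only the ``strong'' part of the singularity of $\phi$: when $\phi$ carries no such part, the asymptotic envelope leaves $\psi$ unchanged.

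There is essentially no obstacle: the corollary is a one-line specialization, and any potential subtlety has already been absorbed into Theorem \ref{RE1}. The only thing to be careful about is not to confuse the two roles of $\phi$ in the hypothesis ($\phi$ plays itself, and also plays $w$), and to note that the inequality $\phi\ge g_\phi+w$ becomes an equality in this case, which is a permissible (degenerate) instance of the assumption.
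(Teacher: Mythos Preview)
Your proof is correct and matches the paper's own argument exactly: the corollary is obtained by applying Theorem~\ref{RE1} with the choice $w=\phi$, for which the hypothesis $\phi\ge g_\phi+w$ becomes an equality and $g_w=g_\phi=0$. Your additional remark that the conclusion simplifies to $P[\phi](\psi)=\psi$ is a helpful sanity check not stated in the paper, but the core derivation is identical.
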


In particular, this recovers the aforementioned result for $\F_1$ because $g_\phi=0$ for all $\phi\in\F_1(D)$.

By the same reason, Corollary~\ref{RE0} proves the Rooftop Equality for the Cegrell class $\F^a(D)$ of functions $\phi\in\F(D)$ such that $(dd^c\phi)^n$ do not charge pluripolar sets.

It turns however out that Theorem~\ref{RE1} works also for the whole class $\F$:

\begin{corollary}\label{REF} If $\phi\in\F(D)$, then $P[\phi](\psi)= P(g_\phi,\psi)$ for any $\psi\in\PSH^-(D)$.
\end{corollary}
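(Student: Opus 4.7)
The plan is to reduce the equality to Theorem~\ref{RE1} by producing some $w\in\PSH^-(D)$ with $g_w=0$ satisfying $\phi\ge g_\phi+w$. Decompose the Monge-Amp\`ere measure $\mu:=(dd^c\phi)^n$ along the pluripolar singular set of $\phi$ by writing $\mu=\mu_s+\mu_a$, where $\mu_s:=\Bone_{\{\phi=-\infty\}}\mu$ and $\mu_a:=\Bone_{\{\phi>-\infty\}}\mu$. Since $\phi\in\F(D)$, the total mass of $\mu$ is finite, and, by standard Cegrell theory, the pluripolar-charging part of $\mu$ is concentrated on $\{\phi=-\infty\}$, so $\mu_a$ puts no mass on any pluripolar set. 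By Theorem~\ref{resid}, $g_\phi\in\F(D)$ and $(dd^cg_\phi)^n=\mu_s$.

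Next, I would invoke Cegrell's solvability theorem for the Monge-Amp\`ere equation to produce $w\in\F^a(D)$ with $(dd^cw)^n=\mu_a$; this is applicable because $\mu_a$ is a finite nonnegative measure dominated by $(dd^c\phi)^n$ that charges no pluripolar set. Since $w\in\F^a(D)$, its Monge-Amp\`ere does not charge pluripolar sets, and in particular does not charge $\{w=-\infty\}$. Theorem~\ref{resid} then gives $(dd^cg_w)^n=0$, and uniqueness of solutions of the Monge-Amp\`ere equation in $\F(D)$ forces $g_w=0$, in agreement with the remark following Corollary~\ref{RE0}.

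To verify $\phi\ge g_\phi+w$, note that $g_\phi,w\in\F(D)$ and $\F(D)$ is closed under addition (the mixed Monge-Amp\`ere masses appearing in the binomial expansion of $(dd^c(g_\phi+w))^n$ are finite by the Cegrell inequalities), so $g_\phi+w\in\F(D)$. The standard pointwise inequality
\[
(dd^c(g_\phi+w))^n \;\ge\; (dd^cg_\phi)^n+(dd^cw)^n \;=\; \mu_s+\mu_a \;=\; (dd^c\phi)^n,
\]
together with the comparison principle in $\F(D)$ (larger Monge-Amp\`ere mass forces a smaller function, the generalized boundary values on both sides being zero), yields $g_\phi+w\le\phi$. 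Theorem~\ref{RE1} applied with this $w$ then gives $P[\phi](\psi)=P(g_\phi,\psi)$ for every $\psi\in\PSH^-(D)$.

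The main obstacle lies in the second paragraph: certifying that the non-pluripolar piece $\mu_a$ of $(dd^c\phi)^n$ is genuinely realised as the Monge-Amp\`ere measure of a function in $\F^a(D)$ requires Cegrell's existence results for the Dirichlet problem on hyperconvex domains together with the fact that, for $\phi\in\F(D)$, the singular part of $(dd^c\phi)^n$ lives on $\{\phi=-\infty\}$. The comparison-principle step is essentially routine once these inputs are granted.
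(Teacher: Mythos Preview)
Your proof is correct and follows essentially the same route as the paper's. The paper invokes \cite[Cor.~4.15, 4.16]{ACCP} to obtain the pair $\phi_1,\phi_2\in\F(D)$ with $(dd^c\phi_1)^n=\mu_a$, $(dd^c\phi_2)^n=\mu_s$, and $\phi_1+\phi_2\le\phi$, then identifies $\phi_2=g_\phi$ via Theorem~\ref{resid} and applies Theorem~\ref{RE1} with $w=\phi_1\in\F^a(D)$; you unpack that citation into its ingredients (Cegrell solvability for $\mu_a$ plus the domination principle in $\F(D)$), but the logical skeleton is identical.
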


\begin{proof} By \cite[Cor. 4.15, 4.16]{ACCP}, there exists a unique pair of functions $\phi_1,\phi_2\in\F(D)$
such that $(dd^c \phi_1)^n= \Bone_{\{\phi>-\infty\}}(dd^c \phi)^n$, $(dd^c \phi_2)^n= \Bone_{\{\phi=-\infty\}}(dd^c \phi)^n$, and $\phi_1+\phi_2\le\phi\le P(\phi_1,\phi_2)$.

By Theorem~\ref{resid}, $(dd^c g_\phi)^n= \Bone_{\{\phi=-\infty\}}(dd^c \phi)^n$, so the uniqueness gives us $\phi_2=g_\phi$. Since $\phi_1\in\F^a(D)$,  Theorem~\ref{RE1} with $w=\phi_1$ completes the proof.
\end{proof}

\begin{corollary} If $\phi,\psi\in\F(D)$, then
\begin{enumerate}
\item[(i)] $g_{\phi+\psi}=g_{g_\phi+g_\psi}$, while the relation
$g_{\phi+\psi}=g_\phi$ holds if and only if $g_\psi=0$;
\item[(ii)] $g_{\max\{\phi,\psi\}}=g_{\max\{g_\phi,g_\psi\}}$, while the relation
$g_{\max\{\phi,\psi\}}=g_\phi$ holds if and only if $g_\psi\le g_\phi$;
\item[(iii)] $g_{P(\phi,\psi)}=P(g_\phi,g_\psi)$.
\end{enumerate}
\end{corollary}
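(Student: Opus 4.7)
Across all three parts the strategy is to apply Theorem~\ref{resid}: for $\alpha\in\F(D)$, $g_\alpha$ is the unique element of $\F(D)$ with $(dd^c g_\alpha)^n=\Bone_{\{\alpha=-\infty\}}(dd^c\alpha)^n$. Since $\F(D)$ is closed under addition, $\max$, and $P$, the six functions appearing on the two sides of (i)--(iii) all lie in $\F(D)$, so it suffices to check that the defining MA measures of the two sides of each identity coincide, and then invoke uniqueness. The ACCP decomposition $\phi\asymp\phi_1+g_\phi$ with $\phi_1\in\F^a(D)$ used in the proof of Corollary~\ref{REF}, together with the fact that $\F^a$ Monge-Amp\`ere currents mixed with anything do not charge pluripolar sets, is the key auxiliary tool for handling mixed contributions on the pluripolar singular sets.

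For (i), Theorem~\ref{resid} reduces the identity to
$$\Bone_{\{\phi=-\infty\}\cup\{\psi=-\infty\}}(dd^c(\phi+\psi))^n = \Bone_{\{g_\phi=-\infty\}\cup\{g_\psi=-\infty\}}(dd^c(g_\phi+g_\psi))^n.$$
Expanding both sides multilinearly and using the ACCP decomposition for $\phi$ and $\psi$, every term containing a factor $dd^c\phi_1$ or $dd^c\psi_1$ vanishes upon restriction to the pluripolar singular set, so only the pure currents $(dd^c g_\phi)^k\wedge(dd^c g_\psi)^{n-k}$ survive, and the identity follows. The $\Leftarrow$ direction of the iff is immediate from the first identity and idempotency: $g_\psi=0$ gives $g_{\phi+\psi}=g_{g_\phi+0}=g_{g_\phi}=g_\phi$. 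For $\Rightarrow$, the equality $g_{\phi+\psi}=g_\phi$ forces the two defining MA masses to agree, and after the same expansion one isolates $(dd^c g_\psi)^n=0$; Theorem~\ref{resid} applied to $\psi$ then yields $g_\psi=0$.

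Part (ii) follows the same template, this time based on $(dd^c\max\{u,v\})^n=\Bone_{\{u\ge v\}}(dd^c u)^n+\Bone_{\{u<v\}}(dd^c v)^n$ (valid in $\F$ by approximation): after restriction to $\{\phi=-\infty\}\cap\{\psi=-\infty\}$ only the singular contributions survive, and both sides reduce to the same measure. The iff clause is handled as in (i). For (iii), $P(g_\phi,g_\psi)\in\F(D)$, and the rooftop MA inequality~(\ref{MAP}) combined with Theorem~\ref{resid} applied to $\phi$ and $\psi$ shows that $P(g_\phi,g_\psi)$ satisfies $(dd^c u)^n=\Bone_{\{P(\phi,\psi)=-\infty\}}(dd^c P(\phi,\psi))^n$; uniqueness in $\F$ then forces $g_{P(\phi,\psi)}=P(g_\phi,g_\psi)$. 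Equivalently, Corollary~\ref{REF} gives $P(g_\phi,g_\psi)=P[\phi](g_\psi)=P[\phi](P[\psi](0))$, and a composition rule of the form $P[\phi]\circ P[\psi]=P[P(\phi,\psi)]$ on $\PSH^-(D)$ leads to the same conclusion.

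The main obstacle I foresee is controlling the mixed Monge-Amp\`ere currents in (i) and (ii). Pure terms $(dd^c g_\phi)^n$ are handled directly by Theorem~\ref{resid}, but the claim that $dd^c\phi_1\wedge(\cdots)$ puts no mass on pluripolar sets when $\phi_1\in\F^a(D)$ must be extended to arbitrary mixed degrees, typically via approximation by bounded $\E_0(D)$-functions and convergence of mixed currents. In (iii), the rooftop MA formula~(\ref{MAP}) is only an inequality, so upgrading to the equation needed for uniqueness requires a supplementary argument---a total-mass comparison or a direct limiting argument---which is where most of the technical work lives.
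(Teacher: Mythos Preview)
Your approach is genuinely different from the paper's, and parts of it run into real trouble.

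The paper does not compute Monge--Amp\`ere measures at all. Instead it observes that assertions (i) and (ii), together with the auxiliary identities $P(g_\phi,g_\psi)=g_{P(g_\phi,g_\psi)}$ and $g_{P(\phi,\psi)}=g_{P[\phi](\psi)}$, were already established in \cite{R22} under the hypothesis of small unbounded locus, and that the only ingredient actually used there was the \emph{idempotency} $g_{g_\alpha}=g_\alpha$ of all residual functions involved. Since idempotency holds throughout $\F(D)$, (i) and (ii) follow verbatim. For (iii) the paper then chains the Rooftop Equality (Corollary~\ref{REF}) twice:
\[
g_{P(\phi,\psi)}=g_{P[\phi](\psi)}=g_{P(g_\phi,\psi)}=g_{P[\psi](g_\phi)}=g_{P(g_\psi,g_\phi)}=P(g_\phi,g_\psi).
\]
No measure-theoretic computation is needed.

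Your route via Theorem~\ref{resid} and uniqueness in $\F$ is attractive in principle, but the execution has a genuine gap in (ii). The identity
\[
(dd^c\max\{u,v\})^n=\Bone_{\{u\ge v\}}(dd^c u)^n+\Bone_{\{u<v\}}(dd^c v)^n
\]
is \emph{false} in general: take $u=\log|z_1|$, $v=\log|z_2|$ in $\D^2$; then $(dd^cu)^2=(dd^cv)^2=0$ while $(dd^c\max\{u,v\})^2$ is a nonzero point mass at the origin. On the contact set $\{u=v\}$ the Monge--Amp\`ere of the maximum can strictly exceed either summand, so your reduction for (ii) does not go through. The difficulty is not one of approximation; the formula you invoke simply does not hold.

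For (i) your strategy is more plausible---the fact that mixed currents with a factor from $\F^a$ put no mass on pluripolar sets is indeed available---but you would still need to verify that the pluripolar supports $\{\phi+\psi=-\infty\}$ and $\{g_\phi+g_\psi=-\infty\}$ carry the same restriction of the relevant measures, which is not automatic since $\{g_\alpha=-\infty\}$ can be strictly smaller than $\{\alpha=-\infty\}$. For (iii) you correctly note that (\ref{MAP}) is only an inequality; upgrading it to an identity of total measures is precisely what is hard, and the ``composition rule'' $P[\phi]\circ P[\psi]=P[P(\phi,\psi)]$ you fall back on is itself a nontrivial statement that the paper derives from idempotency and Corollary~\ref{REF} rather than assumes. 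In short, the paper's idempotency-plus-Rooftop-Equality argument sidesteps every obstacle you list; your Monge--Amp\`ere computation would need substantial repair in (ii) and additional input in (i) and (iii) to be complete.
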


\begin{proof} Assertions (i) and (ii), as well as the relations $P(g_\phi,g_\psi)=g_{P(g_\phi,g_\psi)}$ and $g_{P(\phi,\psi)}= g_{P[\phi](\psi)}= g_{P[\psi](\phi)}$ are proved in \cite[Prop. 3.7--3.9, Cor. 7.6]{R22} for $\phi,\psi$ with small unbounded loci; however, the only property used in the proofs was the idempotency of all the residual functions involved, which we have in our case of $\F(D)$.
 Then, by Corollary~\ref{REF}, $$g_{P(\phi,\psi)}= g_{P(g_\phi,\psi)}=g_{P(g_\phi,g_\psi)}=P(g_\phi,g_\psi),$$
which proves (iii).
\end{proof}

\section{Geodesic connectivity}

Let $u_0,u_1\in\PSH^-(D)$. Since $P[u_1](u_0)\le P(g_{u_1},u_0)$, the equality $P[u_1](u_0)=u_0$, which is equivalent to the condition $u_t\to u_0$ as $t\to 0$, implies $u_0\le g_{u_1}$, while the reverse implication would mean precisely the Rooftop Equality (\ref{RE}) for $\phi=u_1$ and $\psi=u_0$.

So, we have
\begin{theorem} {\rm \cite{R22}}
Let $u_0,u_1\in \PSH^-(D)$ and let the Rooftop Equality (\ref{RE}) be satisfied for $\phi=u_j$, $j=0,1$. Then $u_t\to u_j$ in $L_{loc}^1(D)$ (and in capacity) if and only if $u_0\le g_{u_1}$ and $u_1\le g_{u_0}$. If $g_{u_j}$ are idempotent, the two inequalities are equivalent to $g_{u_0}= g_{u_1}$.
\end{theorem}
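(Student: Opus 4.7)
The plan is to feed Theorem \ref{Darvas} through the Rooftop Equality and then read off the elementary meaning of a rooftop envelope collapsing to one of its arguments. The argument is essentially a two-line manipulation in each direction; none of the pieces look like a real obstacle, since all the machinery has been assembled in the preceding sections.

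First I would settle convergence at $t=0$. Theorem \ref{Darvas} says $u_t\to u_0$ in $L^1_{loc}(D)$ (and in capacity) if and only if $P[u_1](u_0)=u_0$. The hypothesis gives the Rooftop Equality (\ref{RE}) with $\phi=u_1$ and $\psi=u_0$, so this condition becomes $P(g_{u_1},u_0)=u_0$. Because $P(v,w)$ is the largest psh minorant of $\min\{v,w\}$, the equality $P(g_{u_1},u_0)=u_0$ is equivalent to the pointwise inequality $u_0\le g_{u_1}$: if $u_0\le g_{u_1}$ then $\min\{g_{u_1},u_0\}=u_0$ is already psh, and conversely $P(g_{u_1},u_0)=u_0$ forces $u_0\le g_{u_1}$ from $P(g_{u_1},u_0)\le g_{u_1}$. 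The same reasoning, with the roles of $0$ and $1$ swapped, produces $u_1\le g_{u_0}$ as the condition for convergence at $t=1$, establishing the first equivalence.

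For the second equivalence I would use two elementary properties of the residual operator read directly off its definition: the monotonicity $u\le v\Rightarrow g_u\le g_v$ (if $w\preceq u$ then $w\le u+C\le v+C$, so $w\preceq v$, and the $g$-families are nested), and the trivial inequality $\phi\le g_\phi$ (take $C=0$ in the defining family). Assuming idempotency, $(\Leftarrow)$ is immediate: $g_{u_0}=g_{u_1}$ gives $u_j\le g_{u_j}=g_{u_{1-j}}$. For $(\Rightarrow)$, $u_0\le g_{u_1}$ combined with monotonicity yields $g_{u_0}\le g_{g_{u_1}}=g_{u_1}$, where the last step is the idempotency hypothesis; symmetrically $g_{u_1}\le g_{u_0}$, and hence equality.

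The only point requiring a moment's thought is the tautology $P(v,w)=w\Leftrightarrow w\le v$ for $v,w\in\PSH^-(D)$ exploited in the first step, and it is immediate from the definition of the rooftop envelope; no regularization issue arises because $\min\{v,w\}$ is already psh when the two are pointwise comparable. Everything else is bookkeeping.
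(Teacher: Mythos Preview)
Your proof is correct and follows the same approach as the paper, which sketches the argument in the paragraph immediately preceding the theorem: apply Theorem~\ref{Darvas}, use the Rooftop Equality to rewrite $P[u_1](u_0)=u_0$ as $P(g_{u_1},u_0)=u_0$, and reduce the latter to $u_0\le g_{u_1}$. Your treatment of the idempotency clause via monotonicity of $u\mapsto g_u$ and the trivial bound $\phi\le g_\phi$ is the natural one and fills in what the paper leaves implicit.
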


\begin{corollary}
\begin{enumerate}
\item[(i)] Any two negative psh functions without strong singularities can be geodesically connected.
\item[(ii)] Two functions from $\F(D)$ can be geodesically connected if and only if their residual functions coincide.
\item[(iii)] No pair of psh functions with idempotent, but different Green-Poisson functions can be connected by a (sub)geodesic.
\item[(iv)] Any negative psh function can be geodesically connected with its residual function.
\item[(v)] Two negative subharmonic functions in $D\subset\C$ can be geodesically connected if and only if their residual functions coincide.
\end{enumerate}
\end{corollary}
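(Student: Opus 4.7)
I plan to derive all five assertions from the preceding theorem and from Theorem~\ref{Darvas}, combined with the available instances of the Rooftop Equality conjecture and the idempotency theorem. The unifying scheme is as follows: by Theorem~\ref{Darvas}, geodesic connectivity of $u_0,u_1$ is equivalent to the pair of identities $P[u_{1-j}](u_j)=u_j$; when (\ref{RE}) holds for both $u_0$ and $u_1$, these reduce via the preceding theorem to the inequalities $u_0\le g_{u_1}$ and $u_1\le g_{u_0}$; and under idempotency, monotonicity of $\phi\mapsto g_\phi$ collapses this pair further to the single condition $g_{u_0}=g_{u_1}$. With this in mind, (i) follows because $g_{u_j}=0$ makes Corollary~\ref{RE0} supply Rooftop Equality for both endpoints, while the required inequalities $u_j\le 0$ are automatic; (ii) follows because Corollary~\ref{REF} gives Rooftop Equality throughout $\F(D)$ and item~(iii) of the idempotency theorem gives $g_{g_{u_j}}=g_{u_j}$, so the preceding theorem applies directly.

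Assertion~(iii) is the necessity direction and requires no Rooftop Equality. If any subgeodesic $v_t$ converges to $u_j$ as $t\to j$, then the largest subgeodesic $u_t\ge v_t$ converges as well, and Theorem~\ref{Darvas} gives $P[u_1](u_0)=u_0$; combining with the always-valid bound $P[u_1](u_0)\le g_{u_1}$ yields $u_0\le g_{u_1}$, and symmetrically $u_1\le g_{u_0}$. Monotonicity of the residual and the assumed idempotency then force $g_{u_0}\le g_{g_{u_1}}=g_{u_1}$ and the reverse, contradicting the hypothesis. Assertion~(iv) is the case that genuinely bypasses Rooftop Equality (which is not known for arbitrary $u$) by a direct evaluation of asymptotic rooftops. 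Since $u\le g_u$, one has $\min(g_u+C,u)=u$, hence $P(g_u+C,u)=u$ for every $C\ge 0$, so $P[g_u](u)=u$. At the other endpoint, any $v\in\PSH^-(D)$ with $v\preceq u$ automatically satisfies $v\le g_u$ and $v\le u+C$ once $C$ is large, hence $v\le P(u+C,g_u)$; passing to the regularised supremum gives $P[u](g_u)\ge g_u$, while the reverse inequality is trivial. Theorem~\ref{Darvas} applied at both endpoints completes~(iv).

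Assertion~(v) requires both idempotency, which is item~(iv) of the idempotency theorem, and Rooftop Equality for an arbitrary $u\in\SH^-(D)$ with $D\subset\C$. The latter I intend to establish by applying Theorem~\ref{RE1} to the Green--Poisson decomposition $u=g_u+w$ with $g_u={\mathcal G}_{\mu_s}+{\mathcal P}_{\nu_s}$ and $w={\mathcal G}_{\mu-\mu_s}+{\mathcal P}_{\nu-\nu_s}$, and then verifying $g_w=0$: the difference $\nu-\nu_s$ has no singular component, so the Poisson contribution to $g_w$ vanishes, while the identity $\{u=-\infty\}=\{g_u=-\infty\}\cup\{w=-\infty\}$, together with $\mu-\mu_s$ being supported on $\{u>-\infty\}$, forces $(\mu-\mu_s)|_{\{w=-\infty\}}=0$, so the Green contribution vanishes as well. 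Once Rooftop Equality is in hand, the last part of the preceding theorem yields the equivalence with $g_{u_0}=g_{u_1}$. The main obstacle I anticipate is precisely this verification in~(v): the remark after Theorem~\ref{resid} is stated as a structural description, and one must carefully check that applying its formula to the auxiliary function $w$ rather than to $u$ respects the underlying hypotheses of the Green--Poisson representation.
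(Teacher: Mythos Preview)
Your proof is correct. The paper does not supply a proof of this Corollary at all; it is stated as an immediate consequence of the preceding theorem together with the established instances of the Rooftop Equality (Corollaries~\ref{RE0} and~\ref{REF}) and the idempotency theorem, and your arguments for (i)--(iii) and (v) spell out exactly this scheme. Your handling of (iv) is worth highlighting: you correctly notice that the preceding theorem is not directly available there, since Rooftop Equality for an arbitrary $u\in\PSH^-(D)$ is open, and you instead verify $P[g_u](u)=u$ and $P[u](g_u)=g_u$ by hand and invoke Theorem~\ref{Darvas}; this is the right way to close the gap. Likewise, for (v) you supply the missing step---Rooftop Equality in dimension one---by writing $u=g_u+w$ via the Green--Poisson representation and checking $g_w=0$ so that Theorem~\ref{RE1} applies; the paper leaves this implicit, and your verification (in particular the inclusion $\{w=-\infty\}\subset\{u=-\infty\}$, which forces $(\mu-\mu_s)|_{\{w=-\infty\}}=0$) is sound.
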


\medskip
{\it Remark.} In the toric case, assertion (ii) covers the main result of \cite{H}.

\section{Other results}

We just mention some other directions related to psh geodesics on domains of $\Cn$.

\medskip

1. One can consider {\it separately residual functions} $g_u^o$ and $g_u^b$ determined by the singularities of $u$ inside the domain and on its boundary, respectively \cite{R22}. For example, the Green function $G_\I$ with poles along a complex space given by an ideal sheaf $\I=\langle f_1,\ldots,f_k\rangle$, mentioned in Section 8, equals actually $g_{\log|f|}^o$; under some additional conditions on $\I$, it coincides with $g_{\log|f|}$, and in some situations, with $g_{\log|f|}^b$.

\medskip

2. Residual functions $g_u$ for functions $u\in\PSH^-(D)$ with well-defined $(dd^cu)^n$ and possessing boundary values $\tilde u$ in the sense of Cegrell were considered in \cite{R22}. It was shown there that if $\int_D (dd^cu)^n<\infty$ and $\tilde u$ has small unbounded locus, then $g_u$ is idempotent and  equal to $P(g_u^o,g_u^b)$.

\medskip

3. Cegrell's energy classes can be considered for $m$-subharmonic functions on domains of $\Cn$, $1\le m<n$ \cite{Lu}, \cite{Lu1}. Geodesics for such functions, including the linearity of the corresponding energy functional, were studied in \cite{AC}.

\medskip
4. Dirichlet problem for unbounded psh functions and its relation to the asymptotic rooftop construction were recently considered in \cite{N1}--\cite{NW}.

\medskip
5. Regularity of toric and convex geodesics was studied in \cite{AD}.

\bigskip\noindent
{\sc Department of Mathematics and Physics, University of Stavanger, 4036 Stavanger, Norway}

\noindent
\emph{e-mail:} alexander.rashkovskii@uis.no

\end{document}